\def\supp{\mathop{\text{supp}}}
\def\EE{\mathcal E}\def\FF{\mathcal{F}}
\def\QQQ{\mathbf{Q}}\def\PPP{\mathbf{P}}\def\RRR{\mathbf{R}}\def\EEE{\mathbf{E}}
\def\nnn{\mathbf{n}}
\begin{document}

\ensubject{fdsfd}%二级学科

%%%%%%%%%%%%%%%%%%%%%%%%%%%%%%%%%%%%%%%%%%%%%%%%%%%%%%%
%%% Authors do not modify the information below
%%% 作者不需要修改此处信息
%%% 有专题名称时, 将第一行的{}注释掉, 使用第二行
\ArticleType{ARTICLES}%栏目
%\SpecialTopic{Progress of Projects Supported by NSFC}%专题
%\SubTitle{Dedicated to Professor Yang Lo on the Occasion of his {\rm 70}th Birthday}%专刊说明
%\Year{2017}
%\Month{January}%
\Vol{60}
\No{1}
\BeginPage{1} %
%\DOI{10.1007/s11425-000-0000-0}
%\ReceiveDate{January 1, 2017}
%\AcceptDate{January 1, 2017}
%\OnlineDate{January 1, 2017}
%%%%%%%%%%%%%%%%%%%%%%%%%%%%%%%%%%%%%%%%%%%%%%%%%%%%%%%

%%% title: 标题
%%%   \title{title}{title for citation}
\title[]{On the Dirichlet form of three-dimensional Brownian motion conditioned to hit the origin}
{}%%后边花括号是眉题

%%% Corresponding author: 通信作者
%%%   \author[number]{Full name}{{email@xxx.com}}
%%% General author: 一般作者
%%%   \author[number]{Full name}{}
\author[1$\ast$]{FITZSIMMONS Patrick J.}{pfitzsim@ucsd.edu}
\author[2]{LI Liping}{liliping@amss.ac.cn}
%\author[3]{AUTHOR Ccc}{third@mail.com}
%\author[4]{AUTHOR Ddd}{fourth@mail.com}

%%% Author information for page head. 页眉中的作者信息
%%% 若此处指定以此处为准, 否则直接调用author信息
\AuthorMark{Fitzsimmons P. J. and Li L.}

%%% Authors for citation. 首页引用中的作者信息
%%% 若此处指定以此处为准, 否则直接调用author信息
\AuthorCitation{Fitzsimmons P. J. and Li L.}

%%% Address. 地址
%%%   \address[number]{Address, City {\rm Postcode}, Country}
\address[1]{Department of Mathematics, University of California, San Diego,\\ 9500 Gilman Drive La Jolla, California 92093-0112 USA}
\address[2]{RCSDS, HCMS, Academy of Mathematics and Systems Science,\\
Chinese Academy of Sciences, Beijing {\rm100190}, China}
%\address[2]{The School of Statistics and Mathematics, Zhongnan University of Economics and Law, Wuhan {\rm 430073}, China}
%\address[3]{School of Mathematics and Statistics, Wuhan University, Wuhan {\rm430072}, China}
%\address[4]{Wuhan Institute of Physics and Mathematics, Chinese Academy of Sciences, Wuhan {\rm 430071}, China}

%%% Abstract. 摘要
\abstract{Our concern in this paper is the energy form induced by an eigenfunction of a self-adjoint extension of the restriction of the Laplace operator to $C_c^\infty(\mathbf{R}^3\setminus \{0\})$. We will prove that this energy form is a regular Dirichlet form with core $C_c^\infty(\mathbf{R}^3)$. The associated diffusion $X$ behaves like a $3$-dimensional Brownian motion with a mild  radial drift when far  from $0$, subject to an ever-stronger push toward $0$ near that point. In particular $\{0\}$ is not a polar set with respect to $X$. The diffusion $X$ is  rotation invariant, and admits a skew-product representation  before hitting $\{0\}$:  its radial part is a diffusion on $(0,\infty)$ and its angular part is a time-changed Brownian motion on the sphere $S^2$. The radial part of $X$ is a ``reflected''  extension of  the radial part of $X^0$ (the part process of $X$ before hitting $\{0\}$). Moreover,  $X$ is the unique reflecting extension of $X^0$, but  $X$ is not a semi-martingale.}

%%% Keywords. 关键词
\keywords{Dirichlet form, reflected extension, rotationally invariant process, Fukushima's decomposition}

\MSC{31C25, 60J55, 60J60}

\maketitle

%%%%%%%%%%%%%%%%%%%%%%%%%%%%%%%%%%%%%%%%%%%%%%%%%%%%%%%
%%% The main text. 正文部分%
%%  图表引用\cref公式引用\eqref参考文献\cite
%%%%%%%%%%%%%%%%%%%%%%%%%%%%%%%%%%%%%%%%%%%%%%%%%%%%%%%
\section{Introduction}

Our inspiration for this study is the work of Cranston, Koralov, Molchanov, and Vainberg (\cite{MLSB} and \cite{MLSB2}) on a continuous model for homopolymers based on a ``zero-range potential'' perturbation of the 3-dimensional Laplacian. We start by describing probabilistically a  process constructed in \cite{MLSB2}. This process is symmetric (or reversible) with respect to a suitable measure on $\mathbf{R}^3$, and our goal is to investigate it from the point of view of Dirichlet forms. Let $(Z_t)_{t\ge 0}$ be a standard Brownian motion in $\mathbf{R}^3$, with law $\QQQ^x$ when started at $x$. Fix $\gamma>0$, and define {$L^\epsilon_t:=c_\epsilon\cdot\int_0^t 1_{\{|Z_s|\le\epsilon\}}\,ds$}, where $c_\epsilon:={\pi^2\over 8\epsilon^2}+{\gamma\over\epsilon}$. Now ``tilt'' the measure $\QQQ^x_T:=\QQQ^x|_{\FF_T}$, where $\FF_T:=\sigma\{Z_s:0\le s\le T\}$, as follows:
$$
\PPP^{x,\epsilon}_T(B):={\int_B\exp(L^\epsilon_T)\,d\QQQ^x_T\over\int \exp(L^\epsilon_T)\,d\QQQ^x_T },\qquad B\in\FF_T.
$$
Thus paths that spend a lot of  time near $0$ are being heavily weighted under $\PPP^{x,\epsilon}_T$. It is shown in \cite{MLSB2} that as $\epsilon\downarrow 0$ and then $T\to\infty$, the probability measure $\PPP^{x,\epsilon}_T$ converges weakly to the law $\PPP^x$ of a certain ``Brownian motion with singular drift''. It is this perturbed Brownian motion, which we label $X=(X_t)_{t\ge 0}$,  that is the object of our interest. Roughly speaking, with the function $\psi_\gamma$ as defined below in \eqref{PGF}, $X$ is the diffusion on $\RRR^3$ with infinitesimal generator given, for  smooth functions vanishing near $0$, by 
$$
Af={1\over 2}\Delta f+{\nabla\psi_\gamma\cdot\nabla f\over\psi_\gamma}.
$$
(The operator $L_\gamma u:=\psi_\gamma A(\psi_\gamma^{-1}u)-(\gamma^2/2)u$ is a self-adjoint  extension (on $L^2(\RRR^3)$) of ${1\over 2}\Delta|_{C^\infty_c(\RRR^3\setminus\{0\})}$, and is viewed as the result of perturbing the Laplacian ${1\over 2}\Delta$ by a zero-range potential.)

Because the drift $\nabla\log\psi_\gamma(x)=-x(|x|^{-2}+\gamma |x|^{-1})$, $x\not=0$, blows up as $x\to 0$, the process $X$ feels a strong push toward the origin. This push is such that the origin is a regular recurrent point for $X$ (but all other singleton subsets of $\RRR^3$ are polar for $X$). Away from the origin, $X$ behaves  like 3-dimensional Brownian motion with a moderate push toward the origin, but its behavior in time intervals when it visits the origin is so singular that $X$ is not a semimartingale.

We now describe our main results in a little more detail. 
We work on the Hilbert space $L^2(\mathbf{R}^3)$ and start with the unbounded operator 
\begin{equation}\label{LFD}
	L:=\frac{1}{2}\Delta=\frac{1}{2}\sum_{i=1}^3 \frac{\partial^2}{\partial x_i^2}
\end{equation}
with domain
\begin{equation}\label{LFD2}
	\mathcal{D}(L)=C_c^\infty(\mathbf{R}^3_0).
\end{equation}
Here and in the sequel, $\mathbf{R}^3_0:=\mathbf{R}^3\setminus \{0\}$, and $C_c^\infty(\mathbf{R}^3_0)$ denotes the class of all smooth functions with compact support in 
$\mathbf{R}^3_0$. Note that $L$ is symmetric but not self-adjoint on $L^2(\mathbf{R}^3)$. It is an interesting problem to describe the  self-adjoint extensions of   $L$   acting on $L^2(\mathbf{R}^3)$. At the same time,  this is   an important topic in the theory of quantum mechanics; see \cite{SFRH}. For the  following complete characterization to the self-adjoint extensions of $L$ see \cite{SFRH}, \cite{MLSB},  and \cite{PL}.

\begin{lemma}\label{LM1}
The self-adjoint extensions of $L$, to an operator acting on $L^2(\mathbf{R}^3)$, form a one-parameter family $\mathcal{L}_\gamma,\gamma\in \mathbf{R}$. The spectrum of $\mathcal{L}_\gamma$ is given by
\[
	\begin{aligned}
		\text{spec}(\mathcal{L}_\gamma)&=(-\infty,0]\cup \{{\gamma^2/2}\},\quad &\gamma>0,\\
		&= (-\infty, 0],\quad &\gamma\leq 0.
	\end{aligned}
\]
Moreover if $\gamma>0$, then $\gamma^2/2$ is a simple eigenvalue of $\mathcal{L}_\gamma$ with  (normalized) eigenfunction 
\begin{equation}\label{PGF}
	\psi_\gamma(x)=\frac{\sqrt{\gamma}}{\sqrt{2\pi}}\frac{\text{e}^{-\gamma |x|}}{|x|}.
\end{equation}
\end{lemma}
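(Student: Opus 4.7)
The natural route is von Neumann's theory of self-adjoint extensions applied to the closure of $L$. The first task is to compute the deficiency subspaces $\mathcal{N}_{z}:=\ker(L^{*}-zI)$ for $\mathrm{Im}\,z\ne 0$. A function $\psi\in L^{2}(\mathbf{R}^{3})$ lies in $\mathcal{N}_{z}$ if and only if $\frac{1}{2}\Delta\psi=z\psi$ holds distributionally on $\mathbf{R}^{3}_0$. Expanding in spherical harmonics, $\psi(r\omega)=\sum_{\ell,m}r^{-1}g_{\ell,m}(r)Y_{\ell}^{m}(\omega)$, the radial equation $g_{\ell,m}''(r)=\bigl(\ell(\ell+1)/r^{2}+2z\bigr)g_{\ell,m}(r)$ has, for $\ell\ge 1$, no nontrivial solution producing an $L^{2}(\mathbf{R}^{3})$ function: the branch that decays at infinity behaves like $r^{-\ell-1}$ near $0$, which is not square-integrable when $\ell\ge 1$. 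Only the $\ell=0$ sector contributes, via the one-dimensional subspace spanned by $x\mapsto|x|^{-1}\exp(-\sqrt{2z}\,|x|)$ (principal root). Hence $L$ has deficiency indices $(1,1)$, so by von Neumann's theorem the self-adjoint extensions of $L$ form a one real parameter family $\mathcal{L}_{\gamma}$.

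The second step is to match the parameterization to the formula \eqref{PGF}. I would use a boundary-triple description at $0$: every $\psi\in\mathcal{D}(L^{*})$, through its $\ell=0$ component, admits the asymptotic $\psi(x)=\alpha|x|^{-1}+\beta+o(1)$ as $x\to 0$, and the self-adjoint extensions correspond to the closed linear relations $\beta=-\gamma\alpha$, $\gamma\in\mathbf{R}$ (with the Friedrichs extension $\frac12\Delta$ on $H^{2}(\mathbf{R}^{3})$ appearing as the limiting case $\alpha=0$). For $\gamma>0$, the function $\psi_{\gamma}$ is easily seen to lie in $L^{2}(\mathbf{R}^{3})$ with unit norm (using $4\pi\int_{0}^{\infty}\mathrm{e}^{-2\gamma r}\,dr=2\pi/\gamma$), and its Taylor expansion near $0$ gives $\alpha=\sqrt{\gamma/(2\pi)}$ and $\beta=-\gamma\alpha$, placing $\psi_{\gamma}$ in $\mathcal{D}(\mathcal{L}_{\gamma})$. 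Writing $\psi_{\gamma}(x)=g(|x|)/|x|$ with $g(r)=\sqrt{\gamma/(2\pi)}\,\mathrm{e}^{-\gamma r}$, the identity $\Delta(g(r)/r)=g''(r)/r$ on $\mathbf{R}^{3}_0$ gives $\frac{1}{2}\Delta\psi_{\gamma}=(\gamma^{2}/2)\psi_{\gamma}$ pointwise, hence $\mathcal{L}_{\gamma}\psi_{\gamma}=(\gamma^{2}/2)\psi_{\gamma}$.

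For the spectrum, essential spectra are preserved under finite-rank resolvent perturbations, and Krein's resolvent formula expresses the difference between $(\mathcal{L}_{\gamma}-z)^{-1}$ and the free-Laplacian resolvent as a rank-one operator. Since the free Laplacian has spectrum $(-\infty,0]$, this yields $\sigma_{\mathrm{ess}}(\mathcal{L}_{\gamma})=(-\infty,0]$ for every $\gamma\in\mathbf{R}$. To locate positive eigenvalues, suppose $\mathcal{L}_{\gamma}\psi=\lambda\psi$ with $\lambda>0$; the eigenequation on $\mathbf{R}^{3}_0$ together with $L^{2}$-integrability forces $\psi$ to be radial and proportional to $|x|^{-1}\mathrm{e}^{-\sqrt{2\lambda}\,|x|}$, and the boundary condition $\beta=-\gamma\alpha$ then requires $\sqrt{2\lambda}=\gamma$, producing the unique simple eigenvalue $\gamma^{2}/2$ when $\gamma>0$ and no positive eigenvalue when $\gamma\le 0$. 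The main obstacle is the rigorous setup of the boundary triple, i.e.\ justifying the expansion $\psi(x)=\alpha|x|^{-1}+\beta+o(1)$ for a general $\psi\in\mathcal{D}(L^{*})$ and checking that the map $\gamma\mapsto\mathcal{L}_{\gamma}$ exhausts the one-parameter family; once that is in place the remaining computations are elementary.
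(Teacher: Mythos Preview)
The paper does not actually prove this lemma; it is stated with citations to Albeverio--Gesztesy--H\o egh-Krohn--Holden, Cranston--Koralov--Molchanov--Vainberg, and Lin, and then used as input for the rest of the paper. Your outline---von~Neumann's extension theory, the spherical-harmonic reduction showing that only the $\ell=0$ sector contributes and hence the deficiency indices are $(1,1)$, the boundary-triple parameterization $\beta=-\gamma\alpha$ of the asymptotics at the origin, and Krein's rank-one resolvent formula to identify the essential spectrum---is precisely the standard route followed in those references (most explicitly in the first). The argument is correct in substance.

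One small point you already flagged yourself: with deficiency indices $(1,1)$ the self-adjoint extensions are naturally parameterized by a circle $\mathbf{R}\cup\{\infty\}$, and the Friedrichs extension (the ordinary $\tfrac12\Delta$ on $H^2(\mathbf{R}^3)$, your case $\alpha=0$) sits at the point at infinity in the $\gamma$-parameterization. The lemma as stated, with $\gamma\in\mathbf{R}$, tacitly omits this limiting case; this is the usual convention in the point-interaction literature and is harmless here since the paper only uses $\gamma>0$.
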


In this paper  we are concerned  with  the \emph{energy form} induced by the eigenfunction $\psi_\gamma$ for a fixed $\gamma>0$. That is
\begin{equation}\label{MFUL}
	\begin{aligned}
		&\mathcal{F}:=\{u\in L^2(\mathbf{R}^3,\psi_\gamma^2dx): \nabla u\in L^2(\mathbf{R}^3,\psi_\gamma^2dx)\}\\
		&\mathcal{E}(u,v):=\frac{1}{2}\int_{\mathbf{R}^3} \nabla u(x)\cdot \nabla v(x) \psi_\gamma(x)^2dx,\quad u,v\in \mathcal{F},
	\end{aligned}
\end{equation}
where $\nabla u$ is the gradient of $u$ in the sense of   distributions. Note that $\psi_\gamma$ is a smooth function on $\mathbf{R}^3_0$ but explodes at $0$. Moreover, $\psi_\gamma\in L^2(\mathbf{R}^3)$ but $\nabla \psi_\gamma\notin L^1_{\text{loc}}(\mathbf{R}^3)$. The explosion of $\psi_\gamma$ at $0$ means that we cannot appeal to  the classical results about  energy forms similar to  \eqref{MFUL}, as found in  \cite{AHS3}, \cite{SFWL}, \cite{MF2}, \cite{MZ}, \cite{MZ2},  \cite{LS}: these results all require {$\nabla \phi\in L^2_{\text{loc}}(\mathbf{R}^3)$}.

However we will see in Theorem~\ref{THM1} that $(\mathcal{E,F})$ is a regular Dirichlet form on $L^2(\mathbf{R}^3,\psi_\gamma^2dx)$ with the core $C_c^\infty(\mathbf{R}^3)$, as a consequence of    Lemma~\ref{LM1}. As a corollary, the operator $-\mathcal{L}_\gamma$, for each $\gamma>0$, is \emph{lower bounded} with parameter $\gamma^2/2$;  see Corollary~\ref{COR1}.  Let $X$ denote the  diffusion process corresponding to $(\mathcal{E,F})$. Then $X$ is an $m$-symmetric recurrent diffusion on $\mathbf{R}^3$ without killing,  where $m(dx) :=\psi_\gamma(x)^2dx$. Moreover, $X$ is recurrent, conservative, and irreducible; see Proposition~\ref{PRO1}. Outside $\{0\}$ the diffusion $X$ is  similar to a 3-dimensional Brownian motion; especially,  each singleton $\{x\}$ is polar, for $x\in\mathbf{R}^3_0$.  But as a result of the explosion of $\psi_\gamma$ at $0$, the $1$-capacity of $\{0\}$ with respect to $(\mathcal{E,F})$ is positive; see Proposition~\ref{PRO2}.  In addition $\{0\}$ is \emph{regular for itself} in the sense that $P^0(T_0=0)=1$ where $P^0$ is the probability measure of $X$ starting from $0$, and $T_0$ is the hitting time of $\{0\}$ with respect to $X$;  see Corollary~\ref{COR3}.

Let {$X^0$} be the part process of $X$ on $\mathbf{R}^3_0$ and $m^0:=m|_{\mathbf{R}^3_0}$. Then $X^0$ is an $m^0$-symmetric diffusion with lifetime $T_0$ on $\mathbf{R}^3_0$, whose associated Dirichlet form is 
\begin{equation}\label{MFUI}
	\begin{aligned}
		&\mathcal{F}^0=\{u\in \mathcal{F}: \tilde{u}(0)=0\}\\
		&\mathcal{E}^0(u,v)=\mathcal{E}(u,v),\quad u,v\in \mathcal{F}^0.
	\end{aligned}
\end{equation}
(Here $\tilde u$ is a quasi-continuous $m$-version of $u$.) It follows from Theorem~3.3.9 of \cite{CM} that $(\mathcal{E}^0,\mathcal{F}^0)$ is regular on $L^2(\mathbf{R}^3_0,m^0)$ with core $C_c^\infty(\mathbf{R}^3_0)$.  Because  $\text{Cap}(\{0\})>0$, the part process $X^0$ is   different from $X$, but the behavior of $X^0$ is easier to understand. Roughly speaking, $X^0$ moves as a diffusion satisfying 
\[
	X_t^0-X_0^0=B_t-\int_0^t \frac{\gamma |X^0_s|+1}{|X^0_s|^2}\cdot X^0_sds
\]
before hitting $\{0\}$, where $(B_t)$ is a three-dimensional Brownian motion.  We can reconstruct $X$ from $X^0$ by stringing together  excursions of $X^0$.  The requisite entrance law $\{\nu_t:t> 0\}$ is uniquely determined by
\[
	\int_0^\infty \nu_tdt=m.
\]

On the other hand, $X$ and $X^0$ are both rotationally-invariant diffusions.  In Proposition~\ref{PRO3} we will write down the  skew product presentation of $X^0$. Its radial part, which dies upon hitting $\{0\}$, is an absorbing diffusion on $(0,\infty)$  and the angular part is a time-changed spherical Brownian motion on $S^2$. In addition the set of  limit points of the angular part of $X^0$,  as time approaches its lifetime, coincides a.s.  with the entire sphere $S^2$.\ 
Unfortunately $X$ does not have an analogous  skew product presentation because the spinning about $0$ of the paths of $X$ at the beginning (and end) of its excursions from $\{0\}$ is too violent. However the radial part of $X$ which is a diffusion on $[0,\infty)$ is simply the  reflection of the radial part of $X^0$. In other words  setting $r_t:=|X^0_t|$ and $\bar{r}_t:=|X_t|$,  we have  
\[
\begin{aligned}
  	&r_t-r_0=\beta_t-\gamma t,\qquad t<T_0, \\
  	&\bar{r}_t-\bar{r}_0=\beta_t-\gamma t+\pi \gamma\cdot  l_t^0,\quad t\geq 0
  	\end{aligned}
\]
where $\beta$ is a 1-dimensional standard Brownian motion and $(l_t^0)_{t\geq 0}$ is the local time of $(\bar{r}_t)_{t\geq 0}$ at $\{0\}$.
We will also examine the Fukushima decomposition of $(\mathcal{E,F})$ with respect to the coordinate functions:
\[
	X_t-X_0=B_t+N_t,\quad t>0
\]
where $B$ is a 3-dimensional standard Brownian motion and $N$ is the zero energy part of $X$. It will be shown in Theorem~\ref{PRO4} that $N$ is \emph{not} of bounded variation. As a corollary, $X$ is not a semi-martingale. By moderating the pole of  $\psi_\gamma$ at $0$ we can generate a sequence of nice semi-martingales that approximate $X$ in a certain sense;  see Proposition~\ref{PRO5}. 

\subsection*{\bf Notation}
For a  domain $E\subset \mathbf{R}^d$, the function classes $C(E),C^1(E)$, and $C^\infty(E)$ are   the continuous functions,   continuously differentiable  differentiable functions, and smooth functions on $E$, respectively. For a Radon measure $\mu$ on $E$, the Hilbert space of all $\mu$-square-integrable functions on $E$ is denoted by $L^2(E,\mu)$ or $L^2(\mu)$, and its norm and inner product will be written as $||\cdot ||_\mu$ and $(\cdot, \cdot)_\mu$. If $\mu$ is the Lebesgue measure, the preceding notation will abbreviated to 
 $L^2(E)$ or $L^2$. Similar notation will be used for  integrable functions.  For any function class $\Theta$, the subclass of all the functions locally  in $\Theta$ (resp. with compact support, bounded) will denoted by $\Theta_{\text{loc}}$ (resp. $\Theta_c,\Theta_b$).

If $(\mathcal{E,F})$ is a Dirichlet form on $L^2(E,\mu)$ and $F$ is a subset of $E$, then the subclass $\mathcal{F}_{F}$ of $\mathcal{F}$ is defined by
\[
\mathcal{F}_F:=\{u\in \mathcal{F}:u=0,\;\mu\text{-a.e. on }F^c\}.
\]
The $\mathcal{E}_1^{\frac{1}{2}}$-norm of $u\in \mathcal{F}$ is 
\[
	||u||_{\mathcal{E}_1^{\frac{1}{2}}}:=[\mathcal{E}(u,u)+(u,u)_\mu]^\frac{1}{2}.
\]
The $1$-capacity of $(\mathcal{E,F})$, as defined in \cite{FU}, will always denoted by $\rm{Cap}$. For basic concepts to do with  Dirichlet forms and associated potential theoretic notions, especially \emph{polar set,  nest, generalized nest, quasi-continuous function, quasi everywhere (q.e. in abbreviation)}, we refer the reader to   \cite{FU}. The quasi-continuous version of   function $u$ is always denoted by $\tilde{u}$.

The diffusion process associated with $(\mathcal{E,F})$ is denoted by $X=(X_t)_{t\ge 0}$. The law of $X$ started at $x\in\RRR^3$ is $\PPP^x$, and the transition semigroup of $X$ is $(P_t)_{t\ge 0}$ defined by $P_tf(x):=\EEE^x[f(X_t)]$ for $t\ge 0$ and $f:\RRR^3\to\RRR$ bounded and measureable. Here $\EEE^x$ is the expectation with respect to $\PPP^x$. 

\section{The Dirichlet forms induced by eigenfunctions}

The general energy form may be defined by the expression
\begin{equation}\label{EUVI}
	\int u(x)v(x)\phi(x)^2dx
\end{equation}
for $u,v\in C_c^\infty(\mathbf{R}^d)$, with respect to some specific function $\phi$ and dimension  $d\ge 1$. The mildest condition  on $\phi$ that we know  ensuring  the closability of $(\EE, C_c^\infty(\mathbf{R}^d))$ on $L^2(\mathbf{R}^d, \phi^2dx)$ is this:  $\phi\in L^2_{\text{loc}}(\mathbf{R}^d)$ and there is a closed set $N$ of Lebesgue measure zero such that the distribution $\nabla \phi$ is in $L^2_{\text{loc}}(\mathbf{R}^d\setminus N)$; see Theorem~2.4 of \cite{AHS3}. In particular $(\mathcal{E}, C_c^\infty(\mathbf{R}^3))$ is closable on $L^2(\mathbf{R}^3,\psi_\gamma^2dx)$ if we choose $N=\{0\}$ in this  condition. Note that $\nabla \psi_\gamma$ is locally bounded on $\mathbf{R}^3_0$ but is not in $L^1_{\text{loc}}(\mathbf{R}^3)$. (Certain other  properties of the diffusion associated with the energy form \eqref{EUVI}, such as the semi-martingale property, require the additional property $\nabla \phi\in L^1_{\text{loc}}(\mathbf{R}^d)$, which $\nabla \psi_\gamma$ does not satisfy.) We begin with a proof of the  regularity of $(\mathcal{E,F})$.

\begin{theorem}\label{THM1}
	Fix $\gamma>0$ and let $\psi_\gamma$ be the eigenfunction given by 
\eqref{PGF}. Then the form \eqref{MFUL} is a regular Dirichlet form on $L^2(\mathbf{R}^3,\psi_\gamma^2(x)dx)$ with  core $C_c^\infty(\mathbf{R}^3)$.
\end{theorem}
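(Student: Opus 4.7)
The plan is to establish, in order: (i) closability of $(\mathcal{E}, C_c^\infty(\mathbf{R}^3))$ on $L^2(\mathbf{R}^3, \psi_\gamma^2 dx)$; (ii) identification of the closure with $(\mathcal{E}, \mathcal{F})$; and (iii) the Markov and regularity properties. Step (i) follows immediately from Theorem~2.4 of \cite{AHS3} applied with exceptional set $N=\{0\}$, since $\psi_\gamma \in L^2_{\mathrm{loc}}(\mathbf{R}^3)$ and $\nabla \psi_\gamma$ is smooth (hence in $L^2_{\mathrm{loc}}$) on $\mathbf{R}^3_0$. Step (iii) will be automatic once (ii) is proved: the Markov property reduces to the chain rule $\nabla \phi(u) = \phi'(u)\nabla u$ for Sobolev functions composed with normal contractions (note $u \in \mathcal{F}$ implies $u, \nabla u \in L^1_{\mathrm{loc}}(\mathbf{R}^3)$ by Cauchy--Schwarz, since $\psi_\gamma^{-2} \sim |x|^2$ near the origin is locally integrable), while regularity is the fact that $C_c^\infty(\mathbf{R}^3) \subset C_c(\mathbf{R}^3) \cap \mathcal{F}$ is sup-dense in $C_c(\mathbf{R}^3)$ (standard) and $\mathcal{E}_1^{1/2}$-dense in $\mathcal{F}$ (by (ii)).

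The substantive part is (ii). The inclusion ``closure $\subseteq \mathcal{F}$'' is a direct consequence of the completeness of $L^2(\psi_\gamma^2 dx)$ combined with the observation that $L^2(\psi_\gamma^2 dx)$-convergence implies distributional convergence on $\mathbf{R}^3$ (one tests against $\phi \in C_c^\infty(\mathbf{R}^3)$ using the bound $\int |\phi|^2 \psi_\gamma^{-2} dx < \infty$). For the reverse inclusion I would run three standard reductions on a given $u \in \mathcal{F}$: first, truncate at infinity with a smooth cutoff $\zeta_R$ satisfying $\zeta_R \equiv 1$ on $\{|x|\le R\}$ and $|\nabla\zeta_R| \le C/R$, so that $\zeta_R u \to u$ in $\mathcal{E}_1^{1/2}$ by dominated convergence (the cross-term $u\nabla\zeta_R$ vanishes because $u \in L^2(\psi_\gamma^2 dx)$); second, truncate the amplitude via $T_N u := (-N)\vee u \wedge N$, with $\nabla T_N u = \mathbf{1}_{\{|u|\le N\}}\nabla u$; third, for $u \in \mathcal{F}$ bounded and compactly supported, mollify to obtain $u_\delta := u * \rho_\delta \in C_c^\infty(\mathbf{R}^3)$ and verify $u_\delta \to u$ and $\nabla u_\delta = (\nabla u)*\rho_\delta \to \nabla u$ in $L^2(\psi_\gamma^2 dx)$.

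The main obstacle is this last, weighted-mollification step, because $\psi_\gamma^2 \asymp |x|^{-2}$ is singular at the origin. I would exploit Muckenhoupt theory: the power weight $|x|^{-2}$ is an $A_2$-weight on $\mathbf{R}^3$ (its exponent $-2$ lies in the admissible range $(-3,3)$), so $\psi_\gamma^2 = \mathrm{const}\cdot e^{-2\gamma|x|}|x|^{-2}$ is locally $A_2$ on balls of bounded size with uniformly controlled constants. This yields a uniform-in-$\delta$ bound $\|\rho_\delta * f\|_{L^2(\psi_\gamma^2 dx)} \le C \|f\|_{L^2(\psi_\gamma^2 dx)}$, from which the required convergence extends from $C_c(\mathbf{R}^3)$ (where it is pointwise-uniform) to all of $L^2(\psi_\gamma^2 dx)$. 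An alternative route, more in keeping with the hint that the theorem is ``a consequence of Lemma~1'', is via the ground-state ($h$-)transform $Uu := \psi_\gamma u$: this unitary map $L^2(\psi_\gamma^2 dx) \to L^2(\mathbf{R}^3)$ intertwines $\mathcal{E}$ with the quadratic form of the nonnegative self-adjoint operator $-(\mathcal{L}_\gamma - \gamma^2/2)$ produced by Lemma~\ref{LM1}, and the density statement then amounts to the well-known fact from the theory of zero-range potentials that $\psi_\gamma \cdot C_c^\infty(\mathbf{R}^3)$ is a form core for the point-interaction Hamiltonian $\mathcal{L}_\gamma$.
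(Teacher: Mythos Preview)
Your main route---cutoff, truncation, then mollification justified by the fact that $|x|^{-2}\in A_2(\mathbf{R}^3)$---is correct and complete, but it is \emph{not} how the paper argues. The paper never approximates a general $u\in\mathcal{F}$ by smooth functions. Instead it lets $(\tilde{\mathcal E},\tilde{\mathcal F})$ denote the closure of $(\mathcal E,C_c^\infty(\mathbf{R}^3))$, writes $A$ and $\tilde A$ for the generators of $(\mathcal E,\mathcal F)$ and $(\tilde{\mathcal E},\tilde{\mathcal F})$, and shows $A=\tilde A$ by the ground-state transform: setting $Hu:=\psi_\gamma A(\psi_\gamma^{-1}u)$ and $\tilde Hu:=\psi_\gamma\tilde A(\psi_\gamma^{-1}u)$, both $H+\gamma^2/2$ and $\tilde H+\gamma^2/2$ are self-adjoint extensions of $\tfrac12\Delta|_{C_c^\infty(\mathbf{R}^3_0)}$ (because $C_c^\infty(\mathbf{R}^3_0)\subset\mathcal D(A)\cap\mathcal D(\tilde A)$ with $Au=\tilde Au=\tfrac12\Delta u+\psi_\gamma^{-1}\nabla\psi_\gamma\cdot\nabla u$ there), and both admit $\psi_\gamma$ as eigenfunction with eigenvalue $\gamma^2/2$ (because $1\in\mathcal D(A)\cap\mathcal D(\tilde A)$ with $A1=\tilde A1=0$). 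By the uniqueness clause in Lemma~\ref{LM1} (the eigenvalue $\gamma^2/2$ singles out $\mathcal L_\gamma$), both equal $\mathcal L_\gamma$, hence $A=\tilde A$ and $\mathcal F=\tilde{\mathcal F}$.

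The trade-offs: your $A_2$ argument is more portable (it would work for any weight locally comparable to an $A_2$ power) and is self-contained, but it imports nontrivial harmonic-analysis machinery. The paper's argument is shorter and simultaneously identifies the generator as the $\psi_\gamma$-transform of $\mathcal L_\gamma-\gamma^2/2$, but it leans entirely on the spectral classification in Lemma~\ref{LM1}. Your final ``alternative route'' is exactly the paper's idea, but as written it defers the whole difficulty to the assertion that $\psi_\gamma\cdot C_c^\infty(\mathbf{R}^3)$ is a form core for $\mathcal L_\gamma$; the paper's contribution is precisely to supply a clean proof of that via the uniqueness in Lemma~\ref{LM1}, rather than quoting it as ``well-known.''
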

\begin{proof}
Since $\psi_\gamma$ is smooth and strictly positive on $\mathbf{R}^3_0$, it follows that $(\mathcal{E,F})$ is a Dirichlet form (not regular) on $L^2(\mathbf{R}^3_0,m^0)$. Hence it is also a Dirichlet form on $L^2(\mathbf{R}^3,m)$. On the other hand clearly
\[
	C_c^\infty(\mathbf{R}^3)\subset \mathcal{F}.
\]
Denote the closure of $(\mathcal{E}, C_c^\infty(\mathbf{R}^3))$ in $(\mathcal{E,F})$ by $(\tilde{\mathcal{E}},\tilde{\mathcal{F}})$. Then $(\tilde{\mathcal{E}},\tilde{\mathcal{F}})$ is a regular Dirichlet form on $L^2(\mathbf{R}^3,m)$. Let $A$ and $\tilde{A}$ be the associated generators of the Dirichlet forms $(\mathcal{E,F})$ and $(\tilde{\mathcal{E}},\tilde{\mathcal{F}})$ respectively. We only need to prove $A=\tilde{A}$.

First we have
\begin{equation}\label{CCIM}
	C_c^\infty(\mathbf{R}^3_0)\subset \mathcal{D}(A)
\end{equation}
and 
\begin{equation}\label{AUFD}
Au=\frac{1}{2}\Delta u+\frac{\nabla \psi_\gamma}{\psi_\gamma}\cdot \nabla u
\end{equation}
for all $u\in C_c^\infty(\mathbf{R}^3_0)$. Indeed for a given  $u\in C_c^\infty(\mathbf{R}^3_0)$, we can deduce that $u\in \mathcal{F}$ and moreover that for all $f\in \mathcal{F}$,
\[
\begin{aligned}
	\mathcal{E}(u,f)&=\frac{1}{2}\sum_{i=1}^3\int\frac{\partial u}{\partial x_i}(x)\frac{\partial f}{\partial x_i}(x)\psi_\gamma(x)^2dx \\
			&=-\frac{1}{2}\sum_{i=1}^3\int f(x) \frac{\partial}{\partial x_i}(\frac{\partial u}{\partial x_i}\psi_\gamma^2)(x)dx \\
			&=-\frac{1}{2}\sum_{i=1}^3\int f(x)\left(\frac{\partial^2 u}{\partial x_i^2}(x)+2 \frac{\partial u}{\partial x_i}(x)\frac{1}{\psi_\gamma(x)}\frac{\partial \psi_\gamma}{\partial x_i}(x)\right)\psi_\gamma^2(x)dx\\
			&=\left(-\frac{1}{2}\Delta u-\frac{\nabla \psi_\gamma}{\psi_\gamma}\cdot \nabla u, f\right)_m.
\end{aligned}
\]
Thus $u\in \mathcal{D}(A)$ and \eqref{AUFD} holds. In the same way we  deduce that 
\begin{equation}\label{IMDA}
	1\in \mathcal{D}(A),\quad A1=0.
\end{equation}
Now define an operator $H$ on $L^2(\mathbf{R}^3)$ by 
\begin{equation}\label{MDHU}
\begin{aligned}
	&\mathcal{D}(H)=\{u\in L^2(\mathbf{R}^3): \psi_\gamma^{-1}\cdot u\in \mathcal{D}(A)\},\\
	&Hu=\psi_\gamma \cdot A(\psi_\gamma^{-1}\cdot u),\quad u\in \mathcal{D}(H).
\end{aligned}
\end{equation}
Then $H$ is a self-adjoint operator on $L^2(\mathbf{R}^3)$. In fact for each $u\in \mathcal{D}(H)$ and $v\in \mathcal{D}(H^*)$, where $H^*$ is the adjoint operator of $H$, we have
\[
	(u,H^*v)_{L^2(\mathbf{R}^3)}=(Hu,v)_{L^2(\mathbf{R}^3)}=(A(\psi_\gamma^{-1}\cdot u), \psi_\gamma^{-1}\cdot v)_m.
\]
It follows that 
\[
	(\psi_\gamma^{-1}\cdot u,\psi_\gamma^{-1}\cdot H^*v)_m=(A(\psi_\gamma^{-1}\cdot u), \psi_\gamma^{-1}\cdot v)_m.
\]
Hence $\psi_\gamma^{-1}\cdot v\in \mathcal{D}(A)$ and $H^*v=\psi_\gamma\cdot A(\psi_\gamma^{-1}\cdot v)$. In other words, $\mathcal{D}(H^*)\subset \mathcal{D}(H)$ and $Hu=H^*u$ for all $u\in \mathcal{D}(H^*)$. On the other hand for all $u,v\in \mathcal{D}(H)$, 
\[
\begin{aligned}
	(Hu,v)_{L^2(\mathbf{R}^3)}&=(A(\psi_\gamma^{-1}\cdot u), \psi_\gamma^{-1}\cdot v)_m=(u, \psi_\gamma\cdot A(\psi_\gamma^{-1}\cdot v))_{L^2(\mathbf{R}^3)}=(u,Hv).
\end{aligned}\]
Hence $\mathcal{D}(H)\subset \mathcal{D}(H^*)$ and $H^*v=Hv$ for all $v\in \mathcal{D}(H)$. Therefore $H$ is self-adjoint on $L^2(\mathbf{R}^3)$. It follows from \eqref{CCIM} and \eqref{MDHU} that 
\[
C_c^\infty(\mathbf{R}^3_0)\subset \mathcal{D}(H)
\]
and for all $u\in C_c^\infty(\mathbf{R}^3_0)$, 
\[
	Hu=\psi_\gamma \cdot A(\psi_\gamma^{-1}\cdot u)=\frac{1}{2}\Delta u-\frac{\gamma^2}{2}u.
\]
Let $H_\gamma:=H+\frac{\gamma^2}{2}$. Then $H_\gamma$ is a self-adjoint extension of the operator $L$ defined  by \eqref{LFD} and  \eqref{LFD2}. On the other hand from \eqref{IMDA} we see that 
\[
	H_\gamma \psi_\gamma=H\psi_\gamma +\frac{\gamma^2}{2}\psi_\gamma=\frac{\gamma^2}{2}\psi_\gamma.
\]
Hence it follows from Lemma~\ref{LM1} that $H_\gamma=\mathcal{L}_\gamma$.

Similarly, we can show that  \eqref{CCIM}, \eqref{IMDA} and \eqref{MDHU} hold for the generator $\tilde{A}$, and then  define an  operator $\tilde{H}$ on $L^2(\mathbf{R}^3)$ as in  \eqref{MDHU}. As before,  $\tilde{H}_\gamma:=\tilde{H}+\gamma^2/2$ is a self-adjoint extension of $L$ and similarly 
\[
	\tilde{H}_\gamma=\mathcal{L}_\gamma=H_\gamma. 
\]
It follows that $A=\tilde{A}$.
\end{proof}

\begin{remark}
	For a given $\gamma>0$ let $\mathcal{L}_\gamma$ be the self-adjoint extension of $L$ as in Lemma~\ref{LM1}. It follows from the above proof that the generator $A$ of $(\mathcal{E,F})$ is  characterized by
	\begin{equation}
	\begin{aligned}
		&\mathcal{D}(A)=\{u\in L^2(\mathbf{R}^3,m):u\cdot \psi_\gamma\in \mathcal{D}(\mathcal{L}_\gamma)\}\\
		&Au=\psi_\gamma^{-1}\cdot \mathcal{L}_\gamma(\psi_\gamma\cdot u)-\frac{\gamma^2}{2}u,\quad u\in \mathcal{D}(A).
	\end{aligned}\end{equation}
The operator $A$ is actually a $\psi_\gamma$-transform of $\mathcal{L}_\gamma$. In other words, let $(P_t)_{t\geq 0}$ and $(Q_t)_{t\geq 0}$ be the semigroups generated by    $A$ and $\mathcal{L}_\gamma$ respectively. Then $(P_t)$ and $(Q_t)$ are symmetric with respect to $m$ and Lebesgue measure, respectively. And for all $u\in L^2(\mathbf{R}^3,m)$, it follows that
\begin{equation}\label{QTUT}
	P_tu=\text{e}^{tA}u=\text{e}^{-\frac{\gamma^2t}{2}}\psi_\gamma^{-1}\cdot \text{e}^{t\mathcal{L}_\gamma}(\psi_\gamma\cdot u)=\text{e}^{-\frac{\gamma^2t}{2}}\psi_\gamma^{-1}\cdot Q_t(\psi_\gamma\cdot u).
\end{equation}
Since $Q_t$ has a (continuous)  density function $q_t(x,y)$ (that is, $Q_t(x,dy)=q_t(x,y)\,dy$; see \cite{SFRH}, \cite{MLSB}, or \cite{PL}),  it follows that
\begin{equation}\label{QTXY}
	P_t(x,dy)=p_t(x,y)m(dy)
\end{equation}
where 
\begin{equation}\label{QTXYF}
	p_t(x,y)=\frac{\text{e}^{-\gamma^2t/2}}{\psi_\gamma(x)\psi_\gamma(y)}q_t(x,y)
	\end{equation}
for $x,y\neq 0$. Recall that \cite{JY} has provided a characterization of $h$-transforms of symmetric Markov processes. The difference here is that neither $\mathcal{L}_\gamma$ nor $\mathcal{L}_\gamma-\gamma^2/2$ is  the generator of Markov process because neither is  Markovian, whereas the transformed operator $A$ is the generator of the Dirichlet form $(\mathcal{E,F})$.
\end{remark}

\begin{corollary}\label{COR1}
Let $\gamma>0$ and the operator $\mathcal{L}_\gamma$ be in Lemma~\ref{LM1}. Then $-\mathcal{L}_\gamma$ is \emph{lower bounded} with parameter $\gamma^2/2$, i.e. 
\[
	(-\mathcal{L}_\gamma u, u)_{L^2(\mathbf{R}^3)}+\frac{\gamma^2}{2}(u,u)_{L^2(\mathbf{R}^3)}\geq 0
\]
for all $u\in \mathcal{D}(\mathcal{L}_\gamma)$. Consequently, the semigroup $(Q_t)$ of $\mathcal{L}_\gamma$ is bounded by $\exp\{\gamma^2\cdot t/2\}$; {\it i.e.} 
\[
	||Q_t||_{L^2(\mathbf{R}^3)}\leq \exp\{{\gamma^2t/2}\}
\]
for all $t\geq 0$.
\end{corollary}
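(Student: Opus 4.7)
The plan is to read off both statements directly from the intertwining relation $\mathcal{L}_\gamma = H + \gamma^2/2$ established inside the proof of Theorem~\ref{THM1}, exploiting the fact that $A$, being the generator of a Dirichlet form, is a non-positive self-adjoint operator on $L^2(\mathbf{R}^3,m)$.

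For the lower-bound inequality, I would argue as follows. Fix $u\in\mathcal{D}(\mathcal{L}_\gamma)$. By the characterization of $\mathcal{D}(H)$ in \eqref{MDHU} together with $\mathcal{L}_\gamma=H+\gamma^2/2$, the function $v:=\psi_\gamma^{-1}\cdot u$ belongs to $\mathcal{D}(A)$, and
\[
	((-\mathcal{L}_\gamma +\tfrac{\gamma^2}{2})u,u)_{L^2(\mathbf{R}^3)}= -(Hu,u)_{L^2(\mathbf{R}^3)}=-(\psi_\gamma\cdot A v, \psi_\gamma\cdot v)_{L^2(\mathbf{R}^3)} = -(Av,v)_m.
\]
Since $(\mathcal{E},\mathcal{F})$ is a Dirichlet form on $L^2(\mathbf{R}^3,m)$ by Theorem~\ref{THM1} and $A$ is its generator, we have $-(Av,v)_m=\mathcal{E}(v,v)\geq 0$, which is the desired inequality.

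For the semigroup bound, rather than invoking the spectral theorem directly, I would use the explicit intertwining \eqref{QTUT}, which rearranges to
\[
	Q_tv = \text{e}^{\gamma^2t/2}\,\psi_\gamma\cdot P_t(\psi_\gamma^{-1}\cdot v),\qquad v\in L^2(\mathbf{R}^3).
\]
Taking the $L^2(\mathbf{R}^3)$-norm and pulling the weight $\psi_\gamma^2$ into $m(dx)$ gives
\[
	\|Q_tv\|_{L^2(\mathbf{R}^3)}^2 = \text{e}^{\gamma^2t}\,\|P_t(\psi_\gamma^{-1}\cdot v)\|_m^2 \leq \text{e}^{\gamma^2t}\,\|\psi_\gamma^{-1}\cdot v\|_m^2 = \text{e}^{\gamma^2t}\,\|v\|_{L^2(\mathbf{R}^3)}^2,
\]
where the inequality uses the Markovian (hence $L^2(m)$-contractive) property of $P_t$ coming from the Dirichlet form $(\mathcal{E},\mathcal{F})$. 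Taking square roots yields $\|Q_t\|\leq \text{e}^{\gamma^2t/2}$.

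There is no real obstacle: both assertions are purely algebraic consequences of the intertwining between $\mathcal{L}_\gamma$ and $A$ already produced in the proof of Theorem~\ref{THM1}. The only point requiring a little care is the domain bookkeeping when asserting $v=\psi_\gamma^{-1}u\in\mathcal{D}(A)$, which follows immediately from \eqref{MDHU}, and the fact that the density $\psi_\gamma^2$ turns the $L^2(\mathbf{R}^3)$-norm of $\psi_\gamma\cdot w$ into the $L^2(m)$-norm of $w$, making the contraction property of $P_t$ on $L^2(m)$ transfer cleanly into the stated bound on $Q_t$.
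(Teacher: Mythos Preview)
Your proof is correct and is precisely the argument the paper has in mind: Corollary~\ref{COR1} is stated without proof immediately after Theorem~\ref{THM1} and the remark containing \eqref{QTUT}, and your derivation simply unwinds the intertwining $\mathcal{L}_\gamma=H+\gamma^2/2$ together with the non-positivity of the Dirichlet generator $A$ and the $L^2(m)$-contractivity of $P_t$. The domain bookkeeping and the passage between the $L^2(\mathbf{R}^3)$ and $L^2(m)$ norms via the weight $\psi_\gamma^2$ are handled correctly.
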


We now record several global  properties of the Dirichlet form $(\mathcal{E,F})$ and the associated diffusion $X$.

\begin{proposition}\label{PRO1}
The Dirichlet form $(\mathcal{E,F})$ is recurrent, conservative, and irreducible. Consequently, the symmetry measure $m$ is an invariant distribution for $X$.
\end{proposition}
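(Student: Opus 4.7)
The plan is to verify recurrence first, deduce conservativeness as an immediate corollary, and then establish irreducibility; the invariance of $m$ will then follow at once. The key observation is that, since $\psi_\gamma$ is $L^2$-normalized in Lemma~\ref{LM1}, the symmetry measure $m=\psi_\gamma^2\,dx$ is a \emph{probability} measure on $\mathbf{R}^3$. In particular $1\in L^2(\mathbf{R}^3,m)$, and since the distributional gradient of the constant function $1$ vanishes, the definition~\eqref{MFUL} immediately gives $1\in\mathcal{F}$ with $\mathcal{E}(1,1)=0$.

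By the standard characterization of recurrence for a regular Dirichlet form (Theorem~1.6.3 of \cite{FU}), the existence of a sequence in $\mathcal{F}_e$ tending $m$-a.e.\ to $1$ with vanishing energy is equivalent to recurrence, and here the constant sequence $u_n\equiv 1$ already does the job. Conservativeness is then automatic, since recurrence implies conservativeness for regular Dirichlet forms (Lemma~1.6.5 of \cite{FU}); equivalently $P_t1=1$ $m$-a.e.\ for every $t\ge 0$. Combined with the $m$-symmetry of $(P_t)$ this gives the invariance of $m$ at once: for $f\in L^1(m)\cap L^2(m)$,
\[
\int P_tf\,dm=(P_tf,1)_m=(f,P_t1)_m=(f,1)_m=\int f\,dm.
\]

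For irreducibility, I would argue by contradiction: suppose some Borel set $B$ is $(P_t)$-invariant with $0<m(B)<1$. By the standard characterization of invariant sets for a regular Dirichlet form, the form then splits orthogonally with respect to $1_B$; in particular $1_B\in\mathcal{F}$ and $\mathcal{E}(1_B,1_B)=0$. Inserting this into~\eqref{MFUL} forces the distributional gradient $\nabla 1_B$ to vanish on $\mathbf{R}^3_0$ (where $\psi_\gamma>0$), so $1_B$ is a.e.\ constant on each connected component of $\mathbf{R}^3_0$. But $\mathbf{R}^3_0$ is connected, so $1_B$ is a.e.\ equal to a constant in $\{0,1\}$, contradicting $0<m(B)<1$. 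I expect the only genuinely nontrivial step to be this last translation from the probabilistic notion of a $(P_t)$-invariant set to the analytic statement $\mathcal{E}(1_B,1_B)=0$; once that is in hand, the strong locality of $\mathcal{E}$ together with the connectedness of $\mathbf{R}^3_0$ closes the argument.
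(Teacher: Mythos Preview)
Your proof is correct and follows essentially the same line as the paper's. Both arguments observe that $1\in\mathcal{F}$ with $\mathcal{E}(1,1)=0$ (since $m$ is a probability measure and $\nabla 1=0$), deduce recurrence and hence conservativeness, and then obtain irreducibility from the fact that $\mathcal{E}(u,u)=0$ forces $\nabla u=0$ a.e.\ and hence $u$ constant; the paper packages this last step by citing Theorem~2.1.11 of \cite{CM}, whereas you unwind it directly via invariant sets, but the content is the same.
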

\begin{proof}
	It follows from \eqref{IMDA} that $1\in \mathcal{F}$ and $\mathcal{E}(1,1)=0$. Thus $(\mathcal{E,F})$ is recurrent, hence also conservative (that is, $P_t1=1$, $m$-a.e, for all $t>0$).
On the other hand for all $u\in \mathcal{F}$ satisfying $\mathcal{E}(u,u)=0$, it follows that $\nabla u=0, m$-a.e. hence a.e. because $m$ is equivalent to the Lebesgue measure. Thus we can deduce that $u$ is constant $m$-a.e. Because  $m(\mathbf{R}^3)<\infty$ it follows from  Theorem~2.1.11 of \cite{CM} that $(\mathcal{E,F})$ is irreducible. The final assertion follows because  $X$ is conservative, and 
	\[
	m(\mathbf{R}^3)=\int_{\mathbf{R}^3} \psi_\gamma^2(x)dx=1.
	\]
That completes the proof.
\end{proof}

\section{Behavior near $0$}

The process induced by the energy form \eqref{EUVI} is sometimes  called a \emph{distorted Brownian motion}.  In particular, the potential theoretic properties of $(\mathcal{E,F})$ on a given relatively compact open subset $G$ of $\mathbf{R}^3_0$ are equivalent to those of the Brownian motion because the $\mathcal{E}_{G,1}^{1/2}$-norm of the part Dirichlet form $(\mathcal{E}_G,\mathcal{F}_G)$ is equivalent to that of $(\frac{1}{2}D_G, H_0^1(G))$. Here $(\frac{1}{2}D_G, H_0^1(G))$ is the  Dirichlet form of the absorbing Brownian motion on $G$.
However,  because of the singularity of $\psi_\gamma$ at the origin, the process $X$ behaves quite differently from Brownian motion  near the state $0$. It is well known that singletons are  polar sets with respect to 3-dimensional Brownian motion. But $\{0\}$ is not polar with respect to $(\mathcal{E,F})$. Actually $\{0\}$ is the only non-polar singleton with respect to $(\mathcal{E,F})$.

\begin{proposition}\label{PRO2}
 The $1$-capacity of the set $\{0\}$ with respect to $(\mathcal{E,F})$ is positive.
\end{proposition}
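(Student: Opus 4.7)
The plan is to establish a Hardy-type inequality for functions in $\mathcal{F}$ that vanish in a neighborhood of $0$, and then show it is incompatible with $\text{Cap}(\{0\})=0$. The starting point is the eigenfunction identity $\Delta\psi_\gamma=\gamma^2\psi_\gamma$ on $\mathbf{R}^3_0$, a direct radial computation from \eqref{PGF}. For $u\in C_c^\infty(\mathbf{R}^3_0)$, set $w:=\psi_\gamma u\in C_c^\infty(\mathbf{R}^3_0)$. Expanding $|\nabla w|^2=|\psi_\gamma\nabla u+u\nabla\psi_\gamma|^2$, using $2\psi_\gamma u\nabla u\cdot\nabla\psi_\gamma=\nabla(\psi_\gamma u^2)\cdot\nabla\psi_\gamma-u^2|\nabla\psi_\gamma|^2$, and integrating by parts (no boundary contributions, since $w$ has compact support in $\mathbf{R}^3_0$), the eigenfunction equation yields
\[
	\int_{\mathbf{R}^3}|\nabla u|^2\psi_\gamma^2\,dx=\int_{\mathbf{R}^3}|\nabla w|^2\,dx+\gamma^2\int_{\mathbf{R}^3}w^2\,dx.
\]
Since $\|w\|_{L^2}=\|u\|_m$, dropping the nonnegative gradient term gives the basic estimate $2\mathcal{E}(u,u)\geq\gamma^2\|u\|_m^2$ for every $u\in C_c^\infty(\mathbf{R}^3_0)$.

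Next I extend this inequality to every $u\in\mathcal{F}$ that vanishes a.e.\ on some ball $B_\delta(0)$. Fix a smooth cutoff $\eta\in C^\infty(\mathbf{R}^3)$ with $\eta=0$ on $B_{\delta/2}$ and $\eta=1$ outside $B_{3\delta/4}$, so $\eta u=u$. By the regularity established in Theorem~\ref{THM1}, one picks $u^{(k)}\in C_c^\infty(\mathbf{R}^3)$ with $u^{(k)}\to u$ in $\mathcal{E}_1^{1/2}$; then $\eta u^{(k)}\in C_c^\infty(\mathbf{R}^3_0)$. Multiplication by $\eta$ is bounded on $(\mathcal{F},\mathcal{E}_1^{1/2})$, because $|\nabla\eta|$ is supported in the annulus $\{\delta/2\leq|x|\leq 3\delta/4\}$, on which $\psi_\gamma^2$ is bounded; hence $\eta u^{(k)}\to u$ in $\mathcal{E}_1^{1/2}$. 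Passing to the limit in the previous inequality gives $2\mathcal{E}(u,u)\geq\gamma^2\|u\|_m^2$ for every $u\in\mathcal{F}$ vanishing a.e.\ on a neighborhood of $0$.

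Suppose now, for contradiction, that $\text{Cap}(\{0\})=0$. Choose $\delta_n\downarrow 0$ and $v_n\in\mathcal{F}$ with $v_n\geq 1$ on $B_{\delta_n}$ and $\mathcal{E}_1(v_n,v_n)<1/n$; replacing $v_n$ by the normal contraction $(v_n\wedge 1)\vee 0$ preserves these properties while ensuring $0\leq v_n\leq 1$ and $v_n=1$ on $B_{\delta_n}$. Set $u_n:=1-v_n\in\mathcal{F}$, so $u_n=0$ on $B_{\delta_n}$, $\mathcal{E}(u_n,u_n)=\mathcal{E}(v_n,v_n)\to 0$, and $\|v_n\|_m\leq\mathcal{E}_1(v_n,v_n)^{1/2}\to 0$ forces $\|u_n\|_m\to\|1\|_m=1$ (recall from the proof of Proposition~\ref{PRO1} that $1\in\mathcal{F}$ with $\|1\|_m^2=m(\mathbf{R}^3)=1$). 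Applying the Hardy-type inequality of the preceding paragraph to $u_n$ yields $\gamma^2=\lim_n\gamma^2\|u_n\|_m^2\leq\lim_n 2\mathcal{E}(u_n,u_n)=0$, contradicting $\gamma>0$. Hence $\text{Cap}(\{0\})>0$.

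The one delicate point is the approximation in the second paragraph: one must verify that every $u\in\mathcal{F}$ supported away from $0$ lies in the $\mathcal{E}_1^{1/2}$-closure of $C_c^\infty(\mathbf{R}^3_0)$, which is precisely what allows the local integration-by-parts identity to be promoted to a genuine inequality on the full form $(\mathcal{E},\mathcal{F})$. The cutoff-and-approximate argument sketched above reduces this to the regularity core $C_c^\infty(\mathbf{R}^3)$ provided by Theorem~\ref{THM1} and the local boundedness of $\psi_\gamma^2$ on $\mathbf{R}^3_0$; once that is in hand, the remainder is a short arithmetic contradiction exploiting $\mathcal{E}(1,1)=0$ and $\|1\|_m=1$.
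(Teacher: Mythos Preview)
Your proof is correct, but it follows a genuinely different route from the paper's.

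The paper proceeds by explicit construction: it writes down the $1$-equilibrium potential $f_\epsilon$ of each ball $B_\epsilon$ (namely $f_\epsilon(x)=1$ on $B_\epsilon$ and $f_\epsilon(x)=\exp\{c(|x|-\epsilon)\}$ outside, with $c=\gamma-\sqrt{\gamma^2+2}$), verifies that $f_\epsilon$ is $1$-excessive by checking $\mathcal{E}_1(f_\epsilon,v)\geq 0$ for all admissible $v$, and computes $\mathcal{E}_1(f_\epsilon,f_\epsilon)$ directly. Sending $\epsilon\downarrow 0$ gives the exact value $\text{Cap}(\{0\})=(\gamma+\gamma c^2)/(\gamma-c)>0$. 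Your argument is instead spectral: from the eigenfunction equation $\Delta\psi_\gamma=\gamma^2\psi_\gamma$ on $\mathbf{R}^3_0$ you extract the Poincar\'e-type inequality $2\mathcal{E}(u,u)\geq\gamma^2\|u\|_m^2$ for all $u\in\mathcal{F}$ vanishing near $0$, and then observe that polarity of $\{0\}$ would let you approximate the constant $1$ by such functions, forcing $\gamma^2\leq 0$.

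What each approach buys: the paper's computation delivers the precise capacity value, not just positivity, and makes no appeal to Theorem~\ref{THM1}. Your argument is shorter and more conceptual; it identifies the non-polarity of $\{0\}$ as a direct consequence of the spectral gap of the part form $(\mathcal{E}^0,\mathcal{F}^0)$ (equivalently, of the lower bound in Corollary~\ref{COR1} seen through the $\psi_\gamma$-transform), and it would transfer verbatim to any eigenfunction-induced energy form where the analogue of $\Delta\psi=\lambda\psi$ holds away from the singularity. The one place to be careful---which you flag---is the density step showing that $u\in\mathcal{F}$ vanishing on $B_\delta$ can be approximated in $\mathcal{E}_1^{1/2}$ by $C_c^\infty(\mathbf{R}^3_0)$ functions; your cutoff argument using the core $C_c^\infty(\mathbf{R}^3)$ from Theorem~\ref{THM1} handles this correctly, since $\psi_\gamma^2$ and $|\nabla\eta|$ are bounded on the annulus $\{\delta/2\leq|x|\leq 3\delta/4\}$.
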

\begin{proof}
	Let $B_\epsilon:=\{x\in \mathbf{R}^3 : |x|<\epsilon\}$ for  $\epsilon>0$. Note that the $1$-capacity $\text{Cap}$ satisfies
	\[
		\text{Cap}(\{0\})=\inf_{\epsilon>0}\text{Cap}(B_\epsilon).
	\]
	Hence we need only  compute the capacity of $B_\epsilon$ for each $\epsilon>0$.
	
	Fix $\epsilon>0$, define $f:\mathbf{R}^3\to[0,\infty)$ by
	\[
		f_\epsilon(x):=\left\lbrace 
			\begin{aligned}
			&1,\quad 0\leq |x|\leq \epsilon,\\ 
			&\exp\{c(|x|-\epsilon)\},\quad |x|>\epsilon,
			\end{aligned}
		\right. 
	\]
	where $c=\gamma-\sqrt{\gamma^2+2}<0$. We shall demonstrate that  $f_\epsilon$ is the \emph{1-equilibrium potential} of the set $B_\epsilon$; see \cite{FU}. Things being so  we have
	\[
		\text{Cap}(B_\epsilon)=\mathcal{E}_1(f_\epsilon,f_\epsilon).
	\] 
	
	Firstly,  $f_\epsilon \in \mathcal{F}$. In fact,
	\[
		\begin{aligned}
			\int f_\epsilon(x)^2 \psi_\gamma(x)^2dx&= 4\pi\cdot \frac{\gamma}{2\pi}\int_0^\infty \tilde{f}_\epsilon(r)^2 \frac{\text{e}^{-2\gamma r}}{r^2}r^2dr\\
			&=2\gamma\int_0^\epsilon \text{e}^{-2\gamma r}dr+2\gamma\int_\epsilon^\infty \text{e}^{2(c-\gamma)r-2c\epsilon}dr\\
			&=1+\frac{c}{\gamma-c}\text{e}^{-2\gamma \epsilon}
		\end{aligned}
	\]
	where $\tilde{f}_\epsilon$ is the radial part of $f_\epsilon$. 
	And 
	\[
	\begin{aligned}
		\int |\nabla f_\epsilon|^2(x)\psi_\gamma(x)^2dx&=4\pi\cdot \frac{\gamma}{2\pi}\int_\epsilon^\infty c^2\tilde{f}_\epsilon(r)^2\frac{\text{e}^{-2\gamma r}}{r^2}r^2dr=\frac{\gamma c^2}{\gamma-c}\cdot \text{e}^{-2\gamma \epsilon}.
	\end{aligned}\]
	This shows that  $f_\epsilon\in \mathcal{F}$ and 
	\[
		\mathcal{E}_1(f_\epsilon,f_\epsilon)=1+\frac{c+\gamma c^2}{\gamma-c}\cdot \text{e}^{-2\gamma \epsilon}.
	\]
	In particular, {$(c+\gamma c^2)/(\gamma-c)<0$ implies}
	\[
		\inf_{\epsilon>0}\mathcal{E}_1(f_\epsilon,f_\epsilon)=\lim_{\epsilon\downarrow 0}\mathcal{E}_1(f_\epsilon,f_\epsilon) =\frac{\gamma+\gamma c^2}{\gamma-c}>0.
	\]

Next	it follows from Theorem~2.1.5 of \cite{FU} that to show that $f_\epsilon$ is $1$-excessive we   need to show that 
	\[
		\mathcal{E}_1(f_\epsilon, v)\geq 0
		\]
 for all $v\in \mathcal{F}$ with $\tilde{v}\geq 0$ on $B_\epsilon$. Note that 
	\begin{equation}\label{TFER}
		\tilde{f}''_\epsilon(r)-2\gamma \tilde{f}'_\epsilon(r)-2\tilde{f}_\epsilon(r)=0,\quad r> \epsilon.
	\end{equation}
	We first assume $v$ has a compact support, say $K$. Clearly the weak distribution derivative satisfies
	\[
		\frac{\partial}{\partial x_i}\left(v\psi_\gamma^2 \frac{\partial f_\epsilon}{\partial x_i}\right)=\psi_\gamma^2\frac{\partial v}{\partial x_i}\frac{\partial f_\epsilon}{\partial x_i}+v\frac{\partial}{\partial x_i}\left(\psi_\gamma^2\frac{\partial f_\epsilon}{\partial x_i}\right).
	\]
	Thus
	\[
		\mathcal{E}(f_\epsilon,v)=\frac{1}{2}\sum_{i=1}^3\int \frac{\partial}{\partial x_i}\left(v\psi_\gamma^2 \frac{\partial f_\epsilon}{\partial x_i}\right) \,dx-\frac{1}{2}\sum_{i=1}^3\int v\frac{\partial}{\partial x_i}\left(\psi_\gamma^2\frac{\partial f_\epsilon}{\partial x_i}\right)\,dx.
	\]
	Choose a function $g\in C_c^\infty(\mathbf{R}^3)$ such that $g\equiv 1$ on $K$. Because the support of $u$ is contained in $K$,
	\[
	\begin{aligned}
		\int \frac{\partial}{\partial x_i}\left(v\psi_\gamma^2 \frac{\partial f_\epsilon}{\partial x_i}\right)\,dx&=\int g(x) \frac{\partial}{\partial x_i}\left(v\psi_\gamma^2 \frac{\partial f_\epsilon}{\partial x_i}\right)(x) dx =-\int \frac{\partial g}{\partial x_i}v\psi_\gamma^2 \frac{\partial f_\epsilon}{\partial x_i} dx =0.
	\end{aligned}\]
Hence from \eqref{TFER} we can deduce that
\[\begin{aligned}
	\mathcal{E}_1(f_\epsilon,v)=&\int_{B_\epsilon} f_\epsilon(x)v(x)\psi_\gamma(x)^2dx+ \int_{B_\epsilon^{\text{c}}} v(x)[f_\epsilon(x)\psi_\gamma(x)^2-\frac{1}{2}\sum_{i=1}^3\frac{\partial}{\partial x_i}(\psi_\gamma^2\frac{\partial f_\epsilon}{\partial x_i})(x)]dx\\
	=&\int_{B_\epsilon} f_\epsilon(x)v(x)\psi_\gamma^2(x)dx+\int_{B_\epsilon^{\text{c}}} v(x)\psi_\gamma^2(x)[\tilde{f}_\epsilon(|x|)+\gamma  \tilde{f}'(|x|)-\frac{1}{2}\tilde{f}''(|x|)]dx\\
	=&\int_{B_\epsilon} v(x)\psi_\gamma^2(x)dx\\
	\geq &0.
\end{aligned}\]
Now let $v$ be an arbitrary element of  $\mathcal{F}$ with $\tilde{v}\geq 0$ on $B_\epsilon$. Choose a sequence of functions $\{g_n\}\subset C_c^\infty(\mathbf{R}^3)$ such that $g_n\uparrow 1$ pointwise  and in the norm $||\cdot ||_{\mathcal{E}_1^{1/2}}$. Then $g_n\cdot v\rightarrow v$ in the norm $||\cdot ||_{\mathcal{E}_1^{1/2}}$ {by \cite[Theorem~1.4.2~(ii)]{FU}}, while
\[
	\mathcal{E}_1(f_\epsilon, g_n\cdot v)\geq 0
\]
by the preceding discussion. By letting $n\rightarrow \infty$ we  deduce that 
\[
\mathcal{E}_1(f_\epsilon,v)\geq 0.
\]
We have now shown that $f_\epsilon$ is the $1$-equilibrium potential of $B_\epsilon$, and so
\[
	\text{Cap}(\{0\})=\frac{\gamma+\gamma c^2}{\gamma-c}>0
\]
where $c=\gamma-\sqrt{\gamma^2+2}$. In particular,  $\{0\}$ is not $m$-polar.
\end{proof}

The part process $X^0$ on $\mathbf{R}^3_0$ (that is, $X$ killed on first hitting $0$)  is  an $m^0$-symmetric Markov process whose lifetime is the hitting time $T_0$ of $\{0\}$ with respect to $X$. The associated Dirichlet form $(\mathcal{E}^0,\mathcal{F}^0)$ is given by \eqref{MFUI}. In particular $(\mathcal{E}^0,\mathcal{F}^0)$ is regular with  core $C_c^\infty(\mathbf{R}^3_0)$. Clearly $\mathcal{F}^0$ is a proper subset of $\mathcal{F}$ and we will see in Corollary~\ref{COR4} below that $(\mathcal{E}^0,\mathcal{F}^0)$ is irreducible and transient.  For the following definition we refer the reader to  Definition~7.2.6 of \cite{CM}.

\begin{definition}
A symmetric Hunt process $Y$ is said to be a \emph{reflecting extension} of a symmetric standard process $Y^0$ if the following hold:
\begin{description}
\item[(\textbf{RE.1})] $E$ is a locally compact separable metric space, $m$ is an everywhere dense positive Radon measure on $E$ and $Y$ is an $m$-symmetric Hunt process on $E$ whose Dirichlet form $(\mathcal{E,F})$ on $L^2(E,m)$ is regular.
\item[(\textbf{RE.2})] $Y^0$ is the part process of $Y$ on a non-$\mathcal{E}$-polar, $\mathcal{E}$-quasi-open subset $E^0$ of $E$ whose Dirichlet form $(\mathcal{E}^0,\mathcal{F}^0)$ on $L^2(E_0,m|_{E_0})$ is irreducible. 
\item[(\textbf{RE.3})] $m(F)=0$ where $F=E\setminus E_0$.
\item[(\textbf{RE.4})] The active reflected Dirichlet form $(\mathcal{E}^{0,\text{ref}},(\mathcal{F}^0)_a^{\text{ref}})$ of $(\mathcal{E}^0,\mathcal{F}^0)$ coincides with  $(\mathcal{E,F})$.
\end{description}
\end{definition}
For the definition of the active Dirichlet form see \cite{CZ} {(and also \cite[(6.3.1) and (6.3.2)]{CM})}. 

\begin{theorem}\label{THM2}
	The process $X$ on $\mathbf{R}^3$ corresponding to $(\mathcal{E,F})$ given by \eqref{MFUL} is a reflecting extension of $X^0$ on $\mathbf{R}^3_0$.
\end{theorem}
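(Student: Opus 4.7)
The proof plan is to verify each of the four conditions (RE.1)--(RE.4). The first three are near-immediate from earlier results; the substantive content is (RE.4), which asks us to identify $(\mathcal{E}, \mathcal{F})$ with the active reflected Dirichlet form of $(\mathcal{E}^0, \mathcal{F}^0)$.

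For (RE.1), $\mathbf{R}^3$ is locally compact separable metric, $m = \psi_\gamma^2 dx$ is a positive Radon measure with full topological support (since $\psi_\gamma > 0$ on $\mathbf{R}^3_0$), and regularity of $(\mathcal{E}, \mathcal{F})$ is Theorem~\ref{THM1}; the associated diffusion $X$ is then automatically an $m$-symmetric Hunt process. For (RE.2), take $E^0 = \mathbf{R}^3_0$, which is open (hence $\mathcal{E}$-quasi-open) and of full $m$-measure, so not $\mathcal{E}$-polar; the associated part process is $X^0$ with Dirichlet form \eqref{MFUI}, and its irreducibility is the content of the forthcoming Corollary~\ref{COR4}. (RE.3) is immediate since $m(\{0\}) = 0$.

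For (RE.4), the strategy is to exhibit an explicit structural decomposition of $(\mathcal{E}, \mathcal{F})$ matching that of the active reflected Dirichlet form. The proof of Proposition~\ref{PRO1} gives $1 \in \mathcal{F}$ with $\mathcal{E}(1,1)=0$, and Proposition~\ref{PRO2} ensures that every $u \in \mathcal{F}$ admits a quasi-continuous representative $\tilde u$ whose value $\tilde u(0)$ at the origin is well defined. Consequently one has
\[
u = \bigl(u - \tilde u(0)\cdot 1\bigr) + \tilde u(0)\cdot 1,
\]
the first summand lying in $\mathcal{F}^0$ by \eqref{MFUI}; this yields the orthogonal decomposition $\mathcal{F} = \mathcal{F}^0 \oplus \mathbb{R}\cdot 1$ with $\mathcal{E}(u,u) = \mathcal{E}^0(u - \tilde u(0), u - \tilde u(0))$. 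I then invoke the general theory of \cite{CM}: because $E \setminus E^0 = \{0\}$ has zero $m$-measure, $X$ is conservative by Proposition~\ref{PRO1} (so no killing is introduced at $0$), and the harmonic complement of $\mathcal{F}^0$ within $\mathcal{F}$ is one-dimensional (spanned by $1$), the extension $(\mathcal{E}, \mathcal{F})$ must coincide with $(\mathcal{E}^{0,\text{ref}}, (\mathcal{F}^0)_a^{\text{ref}})$.

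The main obstacle is rigorously aligning the abstract construction of $(\mathcal{E}^{0,\text{ref}}, (\mathcal{F}^0)_a^{\text{ref}})$ from \cite[(6.3.1), (6.3.2)]{CM} with this concrete decomposition. The inclusion $\mathcal{F} \subset (\mathcal{F}^0)_a^{\text{ref}}$ with matching energies is the easier direction: since the strongly local energy measure of any $u \in \mathcal{F}$ charges no mass at $\{0\}$, its restriction to $\mathbf{R}^3_0$ agrees with the energy measure associated to $u - \tilde u(0)\cdot 1 \in \mathcal{F}^0$. The reverse inclusion is where care is required: one must show that any $v \in (\mathcal{F}^0)_a^{\text{ref}}$ differs from an element of the extended Dirichlet space $\mathcal{F}^0_e$ by a scalar multiple of $1$, which uses $L^2(m)$-integrability together with the one-point nature of the boundary $E \setminus E^0$.
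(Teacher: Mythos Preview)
Your verification of (RE.1)--(RE.3) matches the paper's implicit handling, and your orthogonal decomposition $\mathcal{F}=\mathcal{F}^0\oplus\mathbb{R}\cdot 1$ is correct and useful. However, your route to (RE.4) is genuinely different from the paper's, and the hard direction is not actually carried out.

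The paper proves (RE.4) via the theory of \emph{Silverstein extensions}. Because $\psi_\gamma$ is smooth and strictly positive on $\mathbf{R}^3_0$, Theorems~3.3.1 and~3.3.2 of \cite{FU} imply that $(\mathcal{E},\mathcal{F})$, viewed as a Dirichlet form on $L^2(\mathbf{R}^3_0,m^0)$, is the \emph{maximal} Silverstein extension of the form $(\mathcal{E}, C_c^\infty(\mathbf{R}^3_0))$. Theorem~6.6.9 of \cite{CM} then identifies this maximal Silverstein extension with the active reflected Dirichlet form, yielding $(\mathcal{E},\mathcal{F})=(\mathcal{E}^{0,\mathrm{ref}},(\mathcal{F}^0)_a^{\mathrm{ref}})$ in one stroke. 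No explicit decomposition or computation of $(\mathcal{F}^0)_a^{\mathrm{ref}}$ is needed.

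By contrast, your argument establishes $\mathcal{F}\subset(\mathcal{F}^0)_a^{\mathrm{ref}}$ cleanly but leaves the reverse inclusion as a stated obstacle rather than a proof. The sentence ``the extension $(\mathcal{E},\mathcal{F})$ must coincide with $(\mathcal{E}^{0,\mathrm{ref}},(\mathcal{F}^0)_a^{\mathrm{ref}})$'' is not justified by the facts you list: conservativeness of $X$ and one-dimensionality of $\mathcal{F}/\mathcal{F}^0$ do not by themselves force $(\mathcal{F}^0)_a^{\mathrm{ref}}\subset\mathcal{F}$, since a priori the reflected space is defined intrinsically from $X^0$ and could be strictly larger. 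To close your argument you would need an independent proof that $(\mathcal{F}^0)^{\mathrm{ref}}/\mathcal{F}^0_e$ is at most one-dimensional---essentially that $\mathbf{R}^3_0$ has only one ``end'' through which $X^0$ can exit with finite energy. This is true here (exit through $\infty$ costs infinite energy because $(\mathcal{E},\mathcal{F})$ is recurrent), but it requires work you have not supplied. The Silverstein-extension route sidesteps this entirely by characterizing $(\mathcal{E},\mathcal{F})$ as maximal from the outset.
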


\noindent We only need to verify (\textbf{RE.4}) in the above definition. 

\begin{lemma}
The active reflected Dirichlet form of $(\mathcal{E}^0,\mathcal{F}^0)$ is  equal to $(\mathcal{E,F})$. Here $(\mathcal{E,F})$ and $(\mathcal{E}^0,\mathcal{F}^0)$ are given by \eqref{MFUL} and \eqref{MFUI} respectively.
\end{lemma}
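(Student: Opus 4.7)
The plan is to exploit the explicit description of the active reflected Dirichlet form of a strongly local form. Since $(\mathcal{E}^0,\mathcal{F}^0)$ is strongly local and admits $C_c^\infty(\mathbf{R}^3_0)$ as a core, the energy measure of $u\in\mathcal{F}^0_{\text{loc}}$ is $\mu_{\langle u\rangle}(dx)=|\nabla u|^2\psi_\gamma^2\,dx$ on $\mathbf{R}^3_0$, where $\nabla u$ denotes the distributional gradient on $\mathbf{R}^3_0$. The active reflected space is therefore
\[
  (\mathcal{F}^0)_a^{\text{ref}}=\Bigl\{u\in\mathcal{F}^0_{e,\text{loc}}\cap L^2(\mathbf{R}^3,m):\int_{\mathbf{R}^3_0}|\nabla u|^2\psi_\gamma^2\,dx<\infty\Bigr\},
\]
with $\mathcal{E}^{0,\text{ref}}(u,u)=\frac{1}{2}\int_{\mathbf{R}^3_0}|\nabla u|^2\psi_\gamma^2\,dx$. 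I would then verify the two inclusions $\mathcal{F}\subseteq (\mathcal{F}^0)_a^{\text{ref}}$ and $(\mathcal{F}^0)_a^{\text{ref}}\subseteq\mathcal{F}$, tracking energies along the way.

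For the first inclusion, given $u\in\mathcal{F}$, I would introduce cutoffs $\chi_n\in C_c^\infty(\mathbf{R}^3_0)$ with $\chi_n\equiv 1$ on the annulus $A_n:=\{1/n<|x|<n\}$. Since $\chi_n u$ vanishes near $0$ and off a compact set, it lies in $\mathcal{F}^0$, and it agrees with $u$ on $A_n$. This places $u$ in $\mathcal{F}^0_{\text{loc}}$, and (as $m(\{0\})=0$) the distributional gradient of $u$ on $\mathbf{R}^3$ restricts to the gradient used in $\mu_{\langle u\rangle}$ on each $A_n$. The hypothesis $\int|\nabla u|^2\psi_\gamma^2\,dx<\infty$ from $u\in\mathcal{F}$ then places $u$ in $(\mathcal{F}^0)_a^{\text{ref}}$ with matching energy.

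The reverse inclusion is the substantive step. Given $u\in(\mathcal{F}^0)_a^{\text{ref}}$, we have an $L^2(m)$ function whose distributional gradient on $\mathbf{R}^3_0$ is an $L^2(\psi_\gamma^2\,dx)$ vector field $\nabla u$; the question is whether this same field realizes the distributional gradient of $u$ on all of $\mathbf{R}^3$, for if so then $u\in\mathcal{F}$ tautologically. The main obstacle, and the technical core of the argument, is ruling out a singular contribution concentrated at $\{0\}$. For $\phi\in C_c^\infty(\mathbf{R}^3)$, I would approximate $\phi$ by $\eta_\epsilon\phi$ with $\eta_\epsilon\in C^\infty$ vanishing on $B_\epsilon$, equal to $1$ off $B_{2\epsilon}$, and $|\nabla\eta_\epsilon|\leq C/\epsilon$, and show that the cross term $\int u\,\phi\,\nabla\eta_\epsilon\,dx$ vanishes as $\epsilon\downarrow 0$. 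By Cauchy--Schwarz and the lower bound $\psi_\gamma(x)^2\geq c|x|^{-2}$ valid near $0$, one estimates $\int_{B_{2\epsilon}\setminus B_\epsilon}|u|\,dx=O(\epsilon^{5/2})$, making the cross term $O(\epsilon^{3/2})\to 0$. Combined with the distributional identity on $\mathbf{R}^3_0$, this identifies $\nabla u$ as the distributional gradient of $u$ on all of $\mathbf{R}^3$, so $u\in\mathcal{F}$ and the two Dirichlet forms coincide.
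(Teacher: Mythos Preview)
Your proof is correct and takes a genuinely different route from the paper's. The paper argues abstractly: since $\psi_\gamma$ is smooth and strictly positive on $\mathbf{R}^3_0$, Theorems~3.3.1 and~3.3.2 of \cite{FU} identify $(\mathcal{E},\mathcal{F})$ as the maximal Silverstein extension of $\bigl(\mathcal{E},C_c^\infty(\mathbf{R}^3_0)\bigr)$, and Theorem~6.6.9 of \cite{CM} says the maximal Silverstein extension of a regular Dirichlet form coincides with its active reflected Dirichlet form. Two citations and done.

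Your argument instead works directly with the explicit description of the reflected space for a strongly local form, and the real content lies in your second inclusion: showing that a function with $L^2(\psi_\gamma^2\,dx)$ distributional gradient on $\mathbf{R}^3_0$ cannot pick up a singular contribution at the origin when viewed as a distribution on $\mathbf{R}^3$. Your cutoff estimate exploits the singularity $\psi_\gamma^{-2}(x)\lesssim |x|^2$ to get $\int_{B_{2\epsilon}\setminus B_\epsilon}|u|\,dx=O(\epsilon^{5/2})$, which kills the cross term; this is exactly the quantitative reason the lemma holds, and it is hidden inside the cited theorems in the paper's version. The trade-off is clear: the paper's proof is two lines but opaque, while yours is self-contained and makes visible precisely \emph{why} the pole of $\psi_\gamma$ (of order $|x|^{-1}$, hence $\psi_\gamma^2\sim |x|^{-2}$) is strong enough to rule out a delta-type obstruction at $0$. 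Both are valid; yours would survive transplantation to settings where the Silverstein-extension machinery is unavailable or awkward to invoke.
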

\begin{proof}
	Since $\psi_\gamma$ is smooth and strictly positive on $\mathbf{R}^3_0$, it follows from Theorems~3.3.1 and 3.3.2 of \cite{FU} that the self-adjoint operator $A$ corresponding to $(\mathcal{E,F})$ on $L^2(\mathbf{R}^3_0,m^0)$ is the maximum element of the class of  \emph{Silverstein extensions} of  the form $\mathcal{E}_S$  defined by
	\[
		\begin{aligned}
			&\mathcal{D}(\mathcal{E}_S):=C_c^\infty(\mathbf{R}^3_0),\\
			&\mathcal{E}_S(u,v):=\mathcal{E}(u,v),\quad u,v\in \mathcal{D}(\mathcal{E}_S).
		\end{aligned}
	\]
(For the definitions of Silverstein extension and the related order, we refer the reader to  \S~3.3  and (3.3.3) of \cite{FU} (or Definition~6.6.1 and 6.6.8 of \cite{CM}).) It follows from Theorem~6.6.9 of \cite{CM} that 
	\[
		(\mathcal{E,F})=(\mathcal{E}^{0,\text{ref}},(\mathcal{F}^0)_a^{\text{ref}}).
	\]
That completes the proof.
\end{proof}

With Theorem~\ref{THM2} in hand we can appeal to {Theorems~6.4.2 and 6.6.10~(ii)} of \cite{CM}  to  deduce the following corollary.

\begin{corollary}
	The extended Dirichlet space  of $(\mathcal{E,F})$ is equal to the reflected Dirichlet space $\mathcal{F}^{0,\text{ref}}$ of $(\mathcal{E}^0,\mathcal{F}^0)$:  
	\begin{equation}\label{MFTE}
	\mathcal{F}_{\text{e}}=\mathcal{F}^{0,\text{ref}}=\{u\in \mathcal{F}^0_{\text{loc}}:\nabla u\in L^2(\mathbf{R}^3,\psi_\gamma^2(x)dx)\},
	\end{equation}
	where $\mathcal{F}^0_{\text{loc}}$ is the class of all the functions $u$ for which  there is an increasing sequence of $\mathcal{E}^0$-quasi-open sets $\{D_n\}$ with $\cup_{n=1}^\infty D_n=\mathbf{R}^3_0$ $\mathcal{E}^0$-q.e. and a sequence $\{u_n\}\subset \mathcal{F}^0$ such that $u=u_n$ a.e. on $D_n$.
\end{corollary}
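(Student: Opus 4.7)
The proof will be a direct corollary of Theorem~\ref{THM2} together with two general results from \cite{CM}, exactly as signaled in the paragraph preceding the statement. The plan has two steps.

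First, I would invoke Theorem~6.6.10~(ii) of \cite{CM}, which asserts that whenever a symmetric Hunt process $Y$ is a reflecting extension of $Y^0$ in the sense of (\textbf{RE.1})--(\textbf{RE.4}), the extended Dirichlet space $\mathcal{F}_{\text{e}}$ of the Dirichlet form of $Y$ coincides with the reflected Dirichlet space $\mathcal{F}^{0,\text{ref}}$ of the Dirichlet form of $Y^0$. Applied to $Y=X$ and $Y^0=X^0$—a setup validated by Theorem~\ref{THM2}—this yields the first equality $\mathcal{F}_{\text{e}}=\mathcal{F}^{0,\text{ref}}$.

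Second, I would invoke Theorem~6.4.2 of \cite{CM}, which provides an intrinsic description of the reflected Dirichlet space of a strongly local regular Dirichlet form without killing. Because $(\mathcal{E}^0,\mathcal{F}^0)$ has the gradient form \eqref{MFUI}, an elementary calculation on the core $C_c^\infty(\mathbf{R}^3_0)$ shows that its energy measure is $\mu_{\langle u\rangle}(dx)=|\nabla u(x)|^2\psi_\gamma(x)^2\,dx$ and that no killing measure appears. Theorem~6.4.2 then identifies $\mathcal{F}^{0,\text{ref}}$ with the class $\{u\in\mathcal{F}^0_{\text{loc}}:\nabla u\in L^2(\mathbf{R}^3,\psi_\gamma^2\,dx)\}$, yielding the second equality.

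The subsidiary points that need to be checked are strong locality of $(\mathcal{E}^0,\mathcal{F}^0)$, the absence of a killing term, and the explicit form of the energy measure; all three are routine because $\psi_\gamma$ is smooth and strictly positive on $\mathbf{R}^3_0$, and the form is of the classical weighted-gradient type on that open set. Consequently there is no genuine obstacle in the argument: the real content lies in Theorem~\ref{THM2} (already established) and in the two cited theorems from \cite{CM}, and the corollary is obtained by composing them.
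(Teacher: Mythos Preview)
Your proposal is correct and follows essentially the same approach as the paper: the corollary is deduced directly from Theorem~\ref{THM2} by appealing to Theorems~6.4.2 and 6.6.10~(ii) of \cite{CM}, exactly as you outline. The additional checks you mention (strong locality, absence of killing, and the explicit energy measure of $(\mathcal{E}^0,\mathcal{F}^0)$) are routine and implicit in the paper's one-line justification.
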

\begin{remark}
	Note that collection described in \eqref{MFTE} contains all   $u\in L^2_{\text{loc}}(\mathbf{R}^3_0,\psi_\gamma^2(x)dx)$ such that  $\nabla u\in L^2(\mathbf{R}^3,\psi_\gamma^2(x)dx)$. 
\end{remark}

To get at the singular behavior of  $X$ near $\{0\}$, we first examine the skew-product decomposition of the part process  $X^0$.

\begin{proposition}\label{PRO3}
The process $X^0$ admits a \emph{skew-product representation} 
\begin{equation}\label{RTTA}
	(r_t\theta_{A_t})_{t\geq 0}
\end{equation}
 where $(r_t)_{t\geq 0}$ is a symmetric diffusion on $(0,\infty)$, killed at $\{0\}$,  whose speed measure $l$ and scale function $s$ are 
 \[
 \begin{aligned}
 	&l(dx)=2\gamma \text{e}^{-2\gamma x}dx, \\
 	&s(x)=\frac{1}{4\gamma^2}\text{e}^{2\gamma x};
 \end{aligned}\]
$(A_t)_{t\geq 0}$ is the PCAF of $(r_t)_{t\geq 0}$ with Revuz measure is  
\[
\mu_A(dx)=\frac{l(dx)}{x^2}; 
\]
and $\theta$ is a spherical Brownian motion on $S^2:=\{x\in \mathbf{R}^3: |x|=1\}$, and is  independent of $(r_t)_{t\geq 0}$.
\end{proposition}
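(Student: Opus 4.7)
The plan is to rewrite $(\mathcal{E}^0,\mathcal{F}^0)$ in polar coordinates and recognize the resulting expression as the Dirichlet form of a skew-product Markov process.

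First, since $\psi_\gamma$ depends only on $|x|$ and Euclidean gradients commute with $SO(3)$, the form $(\mathcal{E}^0,\mathcal{F}^0)$ is rotationally invariant, and hence so is the diffusion $X^0$. Writing $x = r\theta$ with $r \in (0,\infty)$ and $\theta \in S^2$, one has $\psi_\gamma(x)^2\,dx = \frac{\gamma}{2\pi} e^{-2\gamma r} dr\, d\sigma(\theta) = \frac{1}{4\pi}\, l(dr)\, d\sigma(\theta)$, where $\sigma$ denotes surface measure on $S^2$ (total mass $4\pi$) and $l$ is as stated. For $u \in C_c^\infty(\mathbf{R}^3_0)$, which is a core for $\mathcal{E}^0$, the decomposition $|\nabla u|^2 = (\partial_r u)^2 + r^{-2}|\nabla_{S^2} u|^2$ yields
\begin{equation*}
\mathcal{E}^0(u,u) = \frac{1}{2}\int (\partial_r u)^2\, \psi_\gamma^2\, dx \;+\; \frac{1}{2}\int r^{-2}\, |\nabla_{S^2}u|^2\, \psi_\gamma^2\, dx.
\end{equation*}

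Second, restricting to radial $u(x) = f(|x|)$ and integrating out $\theta$ gives the one-dimensional Dirichlet form $\mathcal{E}^r(f,f) = \gamma \int_0^\infty f'(r)^2 e^{-2\gamma r}\,dr$ on $L^2((0,\infty), l)$. Integration by parts identifies the associated generator as $\tfrac{1}{2} f'' - \gamma f'$; matching this with Feller's canonical form $\tfrac{1}{2}\tfrac{d}{dl}\tfrac{d}{ds}$ forces $s'(x) = \tfrac{1}{2\gamma} e^{2\gamma x}$, hence the claimed scale function $s(x) = \tfrac{1}{4\gamma^2} e^{2\gamma x}$ (up to affine equivalence). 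The radial part of $X^0$ is therefore the diffusion on $(0,\infty)$ with speed $l$ and scale $s$, killed at $0$.

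Third, the polar decomposition above is exactly the Dirichlet form of the skew product $(r_t\,\theta_{A_t})_{t\ge 0}$ on $(0,\infty) \times S^2$, where $\theta$ is a spherical Brownian motion on $S^2$ independent of $r$ and $A_t = \int_0^t r_s^{-2}\,ds$; the weight $r^{-2}$ in the angular part of the energy encodes, via the Revuz correspondence, the density $x^{-2}$ of $\mu_A$ relative to $l$. This identification is the standard skew-product representation for rotationally invariant Dirichlet forms (see, e.g., It\^o--McKean \S 7.15 or the Dirichlet-form version in \cite{CM}).

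The main obstacle is that $r_t$ reaches $0$ at the finite lifetime $T_0$ and the function $x^{-2}$ is not $l$-integrable at $0$, so $A_t$ may diverge as $t \uparrow T_0$; the classical skew-product theorems are typically stated for rotationally invariant processes that avoid the origin (such as $\mathbf{R}^3$-Brownian motion). To handle this I would apply the skew-product decomposition on each stopping interval $[0, \tau_n]$ with $\tau_n := \inf\{t: r_t \le 1/n\}$, on which $A_t \le n^2 \tau_n < \infty$ and the classical argument applies verbatim, then pass to $n \to \infty$ using $\tau_n \uparrow T_0$ a.s. This yields the stated decomposition on $[0, T_0)$, which is all that is needed since $T_0$ is the lifetime of $X^0$.
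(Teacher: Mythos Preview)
Your argument is correct and follows essentially the same route as the paper: both rewrite $\mathcal{E}^0$ in polar coordinates and match it against the Dirichlet form of a skew product, the paper invoking Theorem~1.1 of \cite{MY} (Fukushima--Oshima) for the latter identification rather than It\^o--McKean. Your final localization step is superfluous in this framework: once the two regular Dirichlet forms agree on the common core $C_c^\infty(\mathbf{R}^3_0)$, the associated processes coincide, and the possible divergence of $A_t$ as $t\uparrow T_0$ is automatically absorbed into the fact that $T_0$ is the lifetime of both $X^0$ and the skew product.
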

\begin{proof}
 Let $(r_t)_{t\geq 0}$ be the diffusion on $(0,\infty)$  described in the statement of the theorem, with speed measure $l$ and scale function $s$. Then $(r_t)_{t\geq 0}$ is $l$-symmetric, and it follows from Theorem~3 of \cite{FL} that the associated Dirichlet form on $L^2((0,\infty),l)$ is the closure of 
 \[
 \begin{aligned}
 	\mathcal{D}(\mathcal{E}^{s,l})&= C_c^\infty((0,\infty)),\\
 	\mathcal{E}^{s,l}(u,v)&=\frac{1}{2}\int u'(x)v'(x)l(dx),\quad u,v \in \mathcal{D}(\mathcal{E}^{s,l}).
 \end{aligned}\]
 Note that $\mu_A$ is a positive Radon measure on $(0,\infty)$ with full support. Let $\sigma$ be the normalized surface measure on $S^2$, so that  $\sigma(S^2)=1$. Then $\theta$ is $\sigma$-symmetric, and it follows from Theorem~1.1 of \cite{MY} that the skew product \eqref{RTTA} is an $m:=l\otimes \sigma$-symmetric diffusion on $\mathbf{R}^3_0$ whose Dirichlet form can be expressed as  the closure of 
 \[
 \begin{aligned}
 	\mathcal{D}(\tilde{\mathcal{E}})=&C_c^\infty(\mathbf{R}^3_0),\\
 	\tilde{\mathcal{E}}(u,v)=&\frac{1}{2}\int_{S^2}\int_0^\infty \frac{\partial u}{\partial r}(r,y)\frac{\partial v}{\partial r}(r,y)l(dr) \sigma(dy)+\int_0^\infty \frac{1}{2}(-\Delta_{S^2}u(r,\cdot),v(r,\cdot))_\sigma \,\mu_A(dr)
 \end{aligned}
 \]
 for $u,v\in C_c^\infty(\mathbf{R}^3_0)$,  where $\Delta_{S^2}$ is the \emph{Laplace-Beltrami operator} on $S^2$. It follows from \eqref{AUFD} and 
 \[
 	\Delta f=\frac{1}{r^2}\frac{\partial }{\partial r}\left(r^2\frac{\partial f}{\partial r}\right)+\frac{1}{r^2}\Delta_{S^2}f,\quad f\in C_c^\infty(\mathbf{R}^3)
 \]
 that 
 \[
 \tilde{\mathcal{E}}(u,v)=\mathcal{E}(u,v),\quad u,v\in C_c^\infty(\mathbf{R}^3_0).
 \]
 Hence the closure of $(\tilde{\mathcal{E}},\mathcal{D}(\tilde{\mathcal{E}}))$coincides with  $(\mathcal{E}^0,\mathcal{F}^0)$. 
\end{proof}

\begin{remark}
	In fact $X^0$ is \emph{rotation invariant} in the sense that  for any orthogonal  transformation $T:\mathbf{R}^3\rightarrow \mathbf{R}^3$, $T(X^0)$ has the same distribution as $X^0$. Moreover $(r_t)_{t\geq 0}$ is the radial part of $X^0$, i.e. $r_t=|X^0_t|$ for all $t\geq 0$. Since $s(0+)>-\infty$ and $l$ is a bounded measure, the boundary point  $\{0\}$ is a \emph{regular boundary point} for  $(r_t)_{t\geq 0}$; see  \cite{CM}. In particular $\{0\}$ is not $l$-polar, hence non-polar, with respect to $(r_t)_{t\geq 0}$. In other words, 
	\begin{equation}\label{PXPR}
		\QQQ_r^x(\tau_0<\infty)>0
	\end{equation}
	for each $x\in (0,\infty)$, where $\QQQ_r^x$ is the law of  $(r_t)_{t\geq 0}$ starting from $x$, and $\tau_0:=\inf\{t>0:r_t=0\}$. Note that $\tau_0$ is the lifetime of $(r_t)_{t\geq 0}$. Since $X^0$ is rotation invariant, we  deduce that
	\begin{equation}\label{PXTI}
		\phi(x):=\PPP^x(T_0<\infty)=\QQQ_r^{|x|}(\tau_0<\infty)>0,
	\end{equation}
	for all $x\in \mathbf{R}^3_0$, where $\PPP^x$ is the law of  $X$ starting from $x$, and $T_0:=\inf\{t>0:X_t=0\}$. On the other hand, note that the PCAF $A$ satisfies
	\[
		A_t=\int_0^t \frac{1}{r_s^2}ds,\quad t<\tau_0.
	\]
Because  $(r_t)_{t\geq 0}$ and $\theta$ are independent, it follows that the spherical part $S_t:=\theta_{A_t}$ of $X^0$ is a diffusion on $S^2$ and satisfies the SDE
\[
	dS_t=\frac{1}{r_t}d\theta_t,\quad t<\tau_0.
\]
From an analogue  of Theorem~2.12 of \cite{MAUM}, we can deduce that 
\[
	\int_0^{\tau_0}\frac{1}{r_s^2}ds=\infty,\quad \QQQ_r^x\text{-a.s.}
\]
for all $x\in (0,\infty)$. Since $(r_t)_{t\geq 0}$ and $\theta$ are independent, the set of  limit points of $S_t$ as {$t\uparrow \tau_0$} coincides with the entire sphere $S^2$, a.s.; {\it cf.} \cite{EKB} . As noted by Erickson, this behavior of $X^{0}$ at its lifetime is reflected in the fact that the excursions of $X$ away from $0$  end (and by symmetry begin) with the angular part of $X$ oscillating so violently that each neighborhood of each point of the unit sphere is visited infinitely often. This behavior is the root cause of the fact that $X$ is not a semi-martingale, as is shown in  the next section.
\end{remark}

\begin{corollary}\label{COR3}
	The process $X$ is the unique \emph{one-point extension} of $X^0$ in the sense that $X$ is $m$-symmetric, admits no killing on $\{0\}$, and the part process of $X$ on $\mathbf{R}^3_0$ is $X^0$. Consequently, $\{0\}$ is regular for itself in the sense that $\PPP^0(T_0=0)=1$ where $\PPP^0$ is the law  of $X$ starting from $0$, and $T_0:=\inf\{t>0: X_t=0\}$.
\end{corollary}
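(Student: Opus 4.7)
The plan is to identify $X$ among all candidate one-point extensions by combining Theorem~\ref{THM2} with the general uniqueness theory for Silverstein extensions of symmetric Dirichlet forms, and then to deduce the regularity of $\{0\}$ for itself as a direct consequence of this uniqueness together with the Blumenthal 0-1 law.

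First I would verify that $X$ itself satisfies the three defining properties of a one-point extension of $X^0$: $m$-symmetry is immediate from the construction of $X$ via $(\mathcal{E},\mathcal{F})$; the part process of $X$ on $\mathbf{R}^3_0$ is $X^0$ by definition; and $X$ admits no killing on $\{0\}$ because the Beurling--Deny representation of $(\mathcal{E},\mathcal{F})$ read off from \eqref{MFUL} contains only the diffusion part, with $X$ conservative by Proposition~\ref{PRO1}. Next, let $Y$ be any other $m$-symmetric Hunt extension of $X^0$ to $\mathbf{R}^3$ with no killing on $\{0\}$, and let $(\mathcal{G},\mathcal{H})$ be its Dirichlet form on $L^2(\mathbf{R}^3,m)=L^2(\mathbf{R}^3_0,m^0)$. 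Since the part process of $Y$ on $\mathbf{R}^3_0$ is again $X^0$, the form $(\mathcal{G},\mathcal{H})$ is necessarily a Silverstein extension of $(\mathcal{E}^0,\mathcal{F}^0)$. The conjunction of the no-killing condition with the fact that $m(\{0\})=0$ then selects the maximum such extension in the Silverstein lattice (see \cite[\S3.3]{FU} and \cite[Theorems~6.6.9 and~7.5.4]{CM}), namely the active reflected Dirichlet form $(\mathcal{E}^{0,\mathrm{ref}},(\mathcal{F}^0)_a^{\mathrm{ref}})$. By Theorem~\ref{THM2} this coincides with $(\mathcal{E},\mathcal{F})$, and since a regular symmetric Dirichlet form determines its associated diffusion uniquely, $Y=X$.

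For the regularity consequence, Blumenthal's 0-1 law yields $\PPP^0(T_0=0)\in\{0,1\}$. If $\PPP^0(T_0=0)=0$, then $\{0\}$ would be irregular for itself, so, given that $\mathrm{Cap}(\{0\})>0$ by Proposition~\ref{PRO2} so that $\{0\}$ is still hit with positive probability from $\mathbf{R}^3_0$, one could manufacture an alternative $m$-symmetric Hunt extension $Y\neq X$ of $X^0$ by restarting from $\{0\}$ according to some entrance law distinct from the one implicit in $X$; this extension would still admit no killing on $\{0\}$, contradicting the uniqueness just established. Hence $\PPP^0(T_0=0)=1$.

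The main obstacle will be the clean deduction that the no-killing condition, combined with $m(\{0\})=0$, singles out the maximal Silverstein extension: this is the technical core of the one-point extension theorem and rests on analyzing how the Beurling--Deny killing measure of an arbitrary Silverstein extension is supported on $\{0\}$, so that requiring zero killing mass leaves only the purely reflecting (maximal) form available. The machinery in \cite{CM} is designed precisely to package this step.
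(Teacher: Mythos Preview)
For the uniqueness assertion your argument is essentially the paper's: the paper's entire proof is the single line ``This is clear from Theorem~7.5.4 of \cite{CM} and \eqref{PXTI}'', and your Silverstein-lattice discussion is a (correct) unpacking of what that theorem contains, with Theorem~\ref{THM2} supplying the identification of the maximal extension.

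The gap is in the regularity paragraph. Your implication ``$\{0\}$ irregular for itself $\Rightarrow$ one can manufacture a distinct $m$-symmetric extension $Y\neq X$'' is not justified and cannot be made to work as stated: the $m$-symmetry requirement together with no killing already pins the entrance law down uniquely (via $\int_0^\infty \nu_t\,dt=m$, as the paper itself records), so no ``distinct entrance law'' is available to build $Y$ from, irregularity or not. Regularity is not deduced from uniqueness here; it is a separate conclusion of Theorem~7.5.4 of \cite{CM}, which is how the paper obtains it. If you want a self-contained argument, use the standard one: for the Hunt process associated with a regular symmetric Dirichlet form the set of irregular points of any nearly Borel set is semipolar, hence polar; since $\mathrm{Cap}(\{0\})>0$ by Proposition~\ref{PRO2}, the singleton $\{0\}$ is not polar, and therefore $0$ cannot be an irregular point of $\{0\}$. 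Equivalently, the quasi-continuous $1$-equilibrium potential $e_{\{0\}}(x)=\EEE^x[e^{-T_0}]$ equals $1$ q.e.\ on $\{0\}$, and non-polarity of $\{0\}$ forces $e_{\{0\}}(0)=1$, i.e.\ $T_0=0$ $\PPP^0$-a.s.
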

\begin{proof}
This  is clear from Theorem~7.5.4 of \cite{CM} and \eqref{PXTI}. 
\end{proof}

\begin{corollary}\label{COR4}
	The Dirichlet form $(\mathcal{E}^0,\mathcal{F}^0)$ is irreducible and transient. 
\end{corollary}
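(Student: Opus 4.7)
The plan is to establish irreducibility first, and then deduce transience via the standard dichotomy that an irreducible regular Dirichlet form is either recurrent or transient.

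For irreducibility, I would mimic the argument used for $(\mathcal{E},\mathcal{F})$ in Proposition~\ref{PRO1}. Suppose $u\in\mathcal{F}^0$ satisfies $\mathcal{E}^0(u,u)=0$. Since $\psi_\gamma>0$ on the connected set $\mathbf{R}^3_0$, this forces $\nabla u=0$ a.e., and hence $u$ is a.e.\ constant on $\mathbf{R}^3_0$. But membership in $\mathcal{F}^0$ requires $\tilde u(0)=0$, and by Proposition~\ref{PRO2} the point $\{0\}$ is not polar, so this constant must be zero. Because $m^0(\mathbf{R}^3_0)=m(\mathbf{R}^3)=1<\infty$, Theorem~2.1.11 of \cite{CM} then yields the irreducibility of $(\mathcal{E}^0,\mathcal{F}^0)$.

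For transience, I would argue by contradiction using the dichotomy. Assume $(\mathcal{E}^0,\mathcal{F}^0)$ is recurrent; then $1\in\mathcal{F}^0_{\text{e}}$ with $\mathcal{E}^0(1,1)=0$, so there exists an $\mathcal{E}^0$-Cauchy approximating sequence $\{u_n\}\subset\mathcal{F}^0$ with $u_n\to 1$ a.e. After passing to a subsequence, the quasi-continuous versions $\tilde u_n$ converge quasi-uniformly to a quasi-continuous version $\tilde 1$ of $1$. Since $\tilde u_n(0)=0$ for every $n$, the limit satisfies $\tilde 1(0)=0$; but by Proposition~\ref{PRO2} we have $\text{Cap}(\{0\})>0$, so the only quasi-continuous representative of the constant function $1$ must equal $1$ at $0$, a contradiction. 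Hence $(\mathcal{E}^0,\mathcal{F}^0)$ is not recurrent, and so it is transient.

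The main technical point is the passage to the extended Dirichlet space in the transience argument: one must justify that the boundary condition $\tilde u(0)=0$ is preserved under $\mathcal{E}^0$-Cauchy limits. This is exactly where the positivity of $\text{Cap}(\{0\})$ enters essentially, since it prevents $\{0\}$ from being quasi-negligible and so makes the evaluation at $0$ a closed condition on $\mathcal{F}^0_{\text{e}}$.
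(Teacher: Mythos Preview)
Your argument is correct, but it differs from the paper's in both halves.

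For irreducibility, the paper invokes the skew-product decomposition of Proposition~\ref{PRO3}: since the radial diffusion $(r_t)$ and the spherical Brownian motion $\theta$ are each irreducible, Theorem~7.2 of \cite{MY} gives irreducibility of the skew product $X^0$. Your route is more elementary and self-contained: it simply repeats the $\nabla u=0\Rightarrow u$ constant argument from Proposition~\ref{PRO1}, so it avoids importing the skew-product machinery. (The extra step forcing the constant to be $0$ is not needed for Theorem~2.1.11 of \cite{CM}, but it does no harm.)

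For transience, the paper writes only ``$1\notin\mathcal{F}^0$''. This suffices because $m^0$ is finite, so $1\in L^2(m^0)$ and hence $1\in(\mathcal{F}^0)_{\mathrm e}$ would already force $1\in(\mathcal{F}^0)_{\mathrm e}\cap L^2(m^0)=\mathcal{F}^0$, which fails since $\tilde 1(0)=1$. Your argument reaches the same contradiction but unpacks it through q.e.\ convergence of an approximating sequence. One point worth making explicit in your write-up: the quasi-continuous versions $\tilde u_n$ and the capacity you invoke are those of the ambient form $(\mathcal{E},\mathcal{F})$ on $\mathbf{R}^3$, not of $(\mathcal{E}^0,\mathcal{F}^0)$ on $\mathbf{R}^3_0$ (where $0$ is not even in the state space). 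Viewing $\{u_n\}\subset\mathcal{F}^0\subset\mathcal{F}$ as an $\mathcal{E}$-Cauchy sequence, the q.e.\ convergence result for $(\mathcal{E},\mathcal{F})$ applies and your evaluation at $0$ is legitimate.
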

\begin{proof}
	Clearly $(r_t)_{t\geq 0}$ and $\theta$ are both irreducible. It follows from Theorem~7.2 of \cite{MY} and Proposition~\ref{PRO3} that $(\mathcal{E}^0,\mathcal{F}^0)$ is also irreducible. Hence it is  transient because $1\notin \mathcal{F}^0$. 
\end{proof}
 
Unsurprisingly, $(\mathcal{E,F})$ is also rotation invariant.  In fact let $T$ be an orthogonal  transformation from $\mathbf{R}^3$ to $\mathbf{R}^3$. Denote the probability measures, the semigroup and Dirichlet form of $\hat{X}:=T(X)$ by $(\hat{\PPP}^x)_{x\in \mathbf{R}^3}$,  $(\hat{P}_t)_{t\geq 0}$ and $(\hat{\mathcal{E}},\hat{\mathcal{F}})$. Clearly $\hat{X}$ is also $m$-symmetric and for any Borel subset $B\subset \mathbf{R}^3$,
 \[
 	\hat{P}_t 1_B(x)=\hat{\PPP}^x(\hat{X}_t\in B)=\PPP^{T^{-1}x}(X_t\in T^{-1}B)=P_t(1_B\circ T)(T^{-1}x).
 \]
 It follows that for $f\in L^2(\mathbf{R}^3,m)$,
 \[
 	\hat{P}_tf(x)=P_t(f\circ T)(T^{-1}x). 
 \]
 Then 
 \[\begin{aligned}
 	(f-\hat{P}_tf,f)_m&=\int (f\circ T-P_t(f\circ T))(T^{-1}x) f\circ T(T^{-1}x)\psi_\gamma^2(x) dx  \\
 		&=\int (f\circ T-P_t(f\circ T))(y) f\circ T(y)\psi_\gamma^2(y) dTy\\
 		&=(f\circ T-P_t(f\circ T), f\circ T)_m.
 \end{aligned}\]
 Thus $f\in \hat{\mathcal{F}}$ if and only if $f\circ T\in \mathcal{F}$. Moreover
 \[
 	\hat{\mathcal{E}}(f,g)=\mathcal{E}(f\circ T,g\circ T),\quad f,g\in \hat{\mathcal{F}}.
 \]
 From the expression \eqref{MFUL} of $(\mathcal{E,F})$ we can easily deduce that $(\hat{\mathcal{E}},\hat{\mathcal{F}})=(\mathcal{E},\mathcal{F})$. Hence $\hat{X}$ and $X$ have the same distribution. 
 
 Let $(\bar{r}_t)_{t\geq 0}$ be the radial part of $X$. Then $(\bar{r}_t)_{t\geq 0}$ is a diffusion on $[0,\infty)$. Denote its semigroup by $(\bar{q}_t)_{t\geq 0}$. Clearly for any positive function $f$ on $[0,\infty)$ and $r\in [0,\infty)$,
 \[
 \bar{q}_t f(r)=P_t(f\otimes 1_{S^2})(x)
 \]
 for all $x\in \mathbf{R}^3$ such that $|x|=r$. It follows that $(\bar{r}_t)_{t\geq 0}$ is $l$-symmetric and its Dirichlet form is {(see \cite[p.64]{CM})}
 \[
 	\begin{aligned}
 		&\bar{\mathcal{F}}^{s,l}=\{u\in L^2([0,\infty),l):u'\in L^2([0,\infty),l)\}\\
 		&\bar{\mathcal{E}}^{s,l}(u,v)=\frac{1}{2}\int_0^\infty u'(x)v'(x)l(dx), \quad u,v\in \bar{\mathcal{F}}^{s,l}.
 	\end{aligned}
 \]
 Hence $(\bar{r})_{t\geq 0}$ is a reflecting diffusion on $[0,\infty)$ which is reflected at the boundary $\{0\}$ and acts as $(r_t)_{t\geq 0}$ on $(0,\infty)$. On the other hand it follows from Corollary~1 and Theorem~5 of \cite{FL} that
 \begin{equation}\label{RTRB}
 \begin{aligned}
  	&r_t-r_0=B_t-\gamma t,\quad t<\tau_0, \\
  	&\bar{r}_t-\bar{r}_0=B_t-\gamma t+\pi \gamma\cdot  l_t^0
 \end{aligned}\end{equation}
 where $(B_t)_{t\geq 0}$ is a one-dimensional standard Brownian motion and $(l^0_t)_{t\geq 0}$ is the local time of $\bar{r}$ at $\{0\}$;  {\it i.e.},  the PCAF of $(\bar{r}_t)_{t\ge 0}$ with  smooth measure $\delta_{\{0\}}$. In particular, it follows from \eqref{RTRB} that the following corollary holds. Recall that we already have $\PPP^0(T_0=0)=1$ in Corollary~\ref{COR3}.
 
 \begin{corollary}
 For each  $x\in \mathbf{R}^3_0$, 
 \[
 		\phi(x)=\PPP^x(T_0<\infty)= 1.
 \]
 \end{corollary}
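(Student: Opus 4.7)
The plan is to reduce the claim to a one-dimensional hitting-time problem for the radial part and then invoke a standard fact about Brownian motion with negative drift. By the rotational invariance of $X$ noted just before the statement, $\phi(x)$ depends only on $|x|$, and by \eqref{PXTI} we already have the identification
\[
\phi(x)=\QQQ_r^{|x|}(\tau_0<\infty).
\]
Thus the entire task reduces to showing that $\QQQ_r^y(\tau_0<\infty)=1$ for every $y>0$, where $(r_t)_{t\ge 0}$ is the radial part of $X^0$ described in Proposition~\ref{PRO3}.

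For this, I would use the SDE representation \eqref{RTRB}: before hitting $\{0\}$,
\[
r_t=r_0+B_t-\gamma t,\qquad t<\tau_0,
\]
so $(r_t)$ is a one-dimensional Brownian motion with constant drift $-\gamma<0$, killed at $\{0\}$, started from $r_0=y>0$. The strong law of large numbers gives $B_t/t\to 0$ almost surely, whence $B_t-\gamma t\to -\infty$ almost surely. Consequently the continuous process $t\mapsto y+B_t-\gamma t$ must cross the level $0$ in finite time with probability one, i.e.\ $\QQQ_r^y(\tau_0<\infty)=1$.

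As a sanity check, one can reread the conclusion from the scale function $s(x)=\tfrac{1}{4\gamma^2}\mathrm{e}^{2\gamma x}$ of Proposition~\ref{PRO3}: since $s(\infty)=\infty$ whereas $s(0+)<\infty$, the standard boundary classification for one-dimensional diffusions forces hitting of $\{0\}$ almost surely from any interior starting point. Combining this with the identification $\phi(x)=\QQQ_r^{|x|}(\tau_0<\infty)$ yields $\phi(x)=1$ for every $x\in \mathbf{R}^3_0$.

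There is no real obstacle here: the entire work has been done upstream, in the rotational invariance, the skew-product decomposition and the SDE \eqref{RTRB}. The only thing to watch is the (harmless) fact that before $T_0$ the radial part of $X$ coincides with $(r_t)$, so the hitting time of $\{0\}$ by $X$ starting from $x\ne 0$ really does equal $\tau_0$ for $(r_t)$ started at $|x|$, which is what makes the appeal to \eqref{PXTI} legitimate.
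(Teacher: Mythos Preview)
Your proof is correct and matches the paper's approach exactly: the paper simply asserts that the corollary ``follows from \eqref{RTRB}'', and your argument is precisely the natural way to unpack that assertion, reducing via \eqref{PXTI} to the radial part and then using that Brownian motion with negative drift $-\gamma$ hits $0$ almost surely. The scale-function sanity check is a nice addition but not needed.
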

 
 Now we can reconstruct the diffusion $X$ by ``stringing together''  its excursions away from  $\{0\}$. The associated It\^o excursion law $\nnn$ is determined by  $X^0$ and a certain $X^0$-\emph{entrance law} {(see \cite[p.43]{Ge})}. A system $\{\nu_t:t>0\}$ of $\sigma$-finite measures on $\mathbf{R}^3_0$ is said to be an $X^0$-entrance law if 
\[
	\nu_sP^0_t=\nu_{s+t}
\]
for every $t,s>0$ where $(P^0_t)_{t\geq 0}$ is the semigroup of $X^0$.
 For the details of constructing a process from excursions  via a suitable entrance law, we refer the reader to \cite{IK},  \cite{STS},  \cite{STS2},  \cite{FT} , \cite{MF},  and \cite{CF}.  Since $X$ admits  no killing inside its state space, it follows that the unique $X^0$-entrance law $\{\nu_t\}$ needed to construct $\nnn$ is characterized by the formula
 \[
 \int_0^\infty \nu_t dt= m.
 \]
Here is a ``skew-product'' description of $\nnn$ that parallels the earlier skew-product decomposition of $X^0$.
On a suitable measure space prepare three independent random objects:

\begin{description}
\item[(a)] A stationary Brownian motion in the unit sphere $S^2$, $(\Theta_t)_{-\infty<t<\infty}$;

\item[(b)] An excursion $(\rho_t)_{0\le t\le \zeta}$ of the process $(\bar{r}_t)_{0\le t<\infty}$ found in \eqref{RTRB};

\item[(c)] A random variable $U$ uniformly distributed on $(0,1)$.
\end{description}

\smallskip

\noindent Now form the time change 

\[
		A(t):=\left\lbrace 
			\begin{aligned}
			&\int_{U\zeta}^t \rho_s^{-2}\,ds,\quad U\zeta\le t<\zeta;\\\ 
			&-\int_t^{U\zeta} \rho_s^{-2}\,ds,\quad 0<t\le U\zeta,
			\end{aligned}
		\right. 
	\]
	
%Form the time change
%$$
%A(t):=\cases{\int_{U\zeta}^t \rho_s^{-2}\,ds,&$U\zeta\le t<\zeta$;\cr   -\int_t^{U\zeta} \rho_s^{-2}\,ds,&$0<t\le U\zeta$.\cr}
%$$
\noindent noticing that $A_{0+}=-\infty$ and $A_{\zeta-}=+\infty$, almost surely. Then the ``distribution'' of the process  
$$
(\rho_t\Theta_{A(t)})_{0<t<\zeta}
$$
is proportional to the the excursion law $\nnn$.  This is consistent with  Erickson's observation that the angular part of the path of our process must ``go wild''
when approaching (or departing) the origin.

\begin{remark}
Although we have limited our discussion to 3-dimensional Brownian motion, a similar development can be made in dimension $d=2$. 
However, there are natural obstructions to our story when $d\ge 4$. Analytically, it is known that the Laplacian restricted to $C^\infty_c(\RRR^d_0)$ admits a unique self-adjoint extension to $L^2(\RRR^d)$, namely the usual Laplacian on $L^2(\RRR^d)$. Thus, the eigenfunction approach taken here appears to be unavailable for $d\ge 4$. Probabilistically, the function $h(x):=|x|^{2-d}$ is harmonic (on $\RRR^d_0$) for the $d$-dimensional Brownian motion. (This corresponds to the limiting case $\gamma=0$ of our construction.) Let $X^*=(X^*_t)_{t\ge 0}$ be the $h$-transform of $d$-dimensional Brownian motion. This is a diffusion on $\RRR^d_0$ with infinitesimal generator ${1\over 2}\Delta f(x)-{(d-2)\over |x|^2}x\cdot \nabla f(x)$. The push toward the origin represented by the drift term in this generator is strong enough that $X^*$ hits the origin with probability $1$ if started away from $0$. But the push is too strong for the process to be able to escape (continuously) from the origin. In fact, if we start $X^*$ uniformly at random on the sphere of radius $\epsilon$ centered at the origin,
normalize by dividing by $\epsilon^{d-2}$, and then send $\epsilon$ to $0$, we obtain a putative excursion law. The resulting measure, call it $\nnn$ would be  the It\^o excursion measure for the recurrent extension of $X^*$, if there were one. But $\nnn$ satisfies
$$
 \nnn[\zeta\in dt ]= C_d\cdot t^{-d/2},\qquad t>0,
$$
where $\zeta$ is the excursion lifetime. In order that it be possible to string together such excursions to obtain a recurrent extension of $X^*$, it is necessary that $\int_0^\infty \min(t,1)\nnn[\zeta\in dt]<\infty$. This latter condition fails for  $d\ge 4$. This would seem to indicate that at least in the radially symmetric case, a \emph{recurrent} distorted Brownian motion that hits the origin is impossible for $d\ge 4$. We hope to explore possible connections between the analytic and probabilistic obstructions in the future.
\end{remark}

 \section{Fukushima's decomposition}
 
Fukushima's decomposition for symmetric Markov processes may be thought of an extension of the familiar semi-martingale decomposition,  is valid even processes of the form $u(X_t)$ ($u\in\mathcal{F}$) that are not semi-martingales. Note that the coordinate functions 
\[
	f^i(x):=x_i,\quad x\in \mathbf{R}^3,i=1,2,3,
\]
are in both $\mathcal{F}$ and $\mathcal{F}^0$.  It follows from Proposition~6 of \cite{FL} that the Fukushima decomposition of these coordinate function relative to $(\mathcal{E}^0,\mathcal{F}^0)$  can be written as
\[
	X_t^0-X_0^0=B_t-\int_0^t \frac{\gamma |X^0_s|+1}{|X^0_s|^2}\cdot X^0_sds, \quad t<T_0
\]
where $B$ is a 3-dimensional standard Brownian motion. Similarly,  for the Dirichlet form $(\mathcal{E,F})$ and the diffusion $X$ there exists a unique \emph{additive functional of zero energy} $N^i$ for each $i=1,2,3$, and a $3$-dimensional standard Brownian motion $B$ such that
\begin{equation}\label{XTXBT}
	X_t-X_0=B_t+N_t,\quad t\geq 0,
\end{equation}
where $N_t=(N^1_t,N^2_t,N^3_t)$; see Theorem 5.5.1 in   \cite{FU}. It follows from Lemma~5.4.4 of \cite{FU} that 
\begin{lemma}
If $t<T_0$, then
\begin{equation}\label{NTIT}
	N_t=-\int_0^t \frac{\gamma |X_s|+1}{|X_s|^2}\cdot X_s\, ds.
\end{equation}
\end{lemma}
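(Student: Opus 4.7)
The plan is to derive the formula by reducing to the already-known Fukushima decomposition of the coordinate functions for the part form $(\mathcal{E}^0,\mathcal{F}^0)$, using the locality of the decomposition on an open set. The key tool, as the excerpt already signals, is Lemma~5.4.4 of \cite{FU}, which asserts that for $u\in\mathcal{F}$ and an $\mathcal{E}$-quasi-open set $G$, the martingale additive functional (MAF) $M^u$ and the continuous additive functional of zero energy $N^u$ in the Fukushima decomposition of $u(X)$ coincide, on the stochastic interval $[\![0,\tau_G[\![$, with the corresponding pieces of the decomposition of $u(X^G)$ with respect to the part form on $G$.

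First I would verify that each coordinate function $f^i(x)=x_i$ lies in both $\mathcal{F}$ and $\mathcal{F}^0$. Membership in $\mathcal{F}$ is immediate from the exponential decay of $\psi_\gamma^2$: $\int x_i^2\psi_\gamma^2(x)\,dx<\infty$ and $\nabla f^i$ is constant. For $\mathcal{F}^0$, since $f^i$ is continuous on $\mathbf{R}^3$ with $f^i(0)=0$, its continuous representative is $\mathcal{E}$-quasi-continuous and $\tilde{f^i}(0)=0$; hence $f^i\in\mathcal{F}^0$ by the characterization \eqref{MFUI}.

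Next I would record the Fukushima decomposition of $f^i$ relative to $(\mathcal{E}^0,\mathcal{F}^0)$, which is precisely the formula cited in the excerpt from \cite{FL}:
\[
X^{0,i}_t-X^{0,i}_0=B^i_t-\int_0^t\frac{\gamma|X^0_s|+1}{|X^0_s|^2}X^{0,i}_s\,ds,\qquad t<T_0,
\]
so that the MAF is $B^i$ and the zero-energy part is the (absolutely continuous) drift integral. Applying Lemma~5.4.4 of \cite{FU} with $G=\mathbf{R}^3_0$ and $\tau_G=T_0$ then identifies both the MAF and the zero-energy AF of $f^i$ under $(\mathcal{E},\mathcal{F})$ with those above on $[\![0,T_0[\![$. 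The Brownian motion $B$ in \eqref{XTXBT} therefore coincides with the one above on $[0,T_0)$, and $N^i_t$ must equal the drift integral for $t<T_0$, yielding \eqref{NTIT}.

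The only genuinely technical point is the invocation of Lemma~5.4.4 and its hypothesis that $f^i\in\mathcal{F}\cap\mathcal{F}^0$; once quasi-continuity of $f^i$ at the origin is observed, the argument is essentially a citation. No additional analysis is required because the explicit drift formula for $X^0$ has already been established via \cite{FL}.
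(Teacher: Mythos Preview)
Your proposal is correct and follows exactly the paper's approach: the paper's entire proof is the single citation ``It follows from Lemma~5.4.4 of \cite{FU},'' and you have simply unpacked what that citation entails by checking $f^i\in\mathcal{F}\cap\mathcal{F}^0$ and invoking the already-established decomposition of $X^0$ from \cite{FL}. Your added verification that $\tilde f^i(0)=0$ (so $f^i\in\mathcal{F}^0$) is a useful detail the paper leaves implicit.
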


On the other hand, recall that the radial parts of $X$ and $X^0$ have the decompositions \eqref{RTRB}. It is a beautiful reflection from $(r_t)_{t\geq 0}$ to $(\bar{r}_t)_{t\geq 0}$. So the natural question is this: Is there an analogous expression relating  $X^0$ and $X$? In other words does $N$ have an expression similar to \eqref{NTIT}, obtained by adding another term with built out of the  local time $(L^0_t)_{t\geq 0}$ of $X$ at $\{0\}$?

An additive functional $A$ is said to be \emph{of bounded variation} if $A_t(\omega)$ is of bounded variation in $t$ on each compact subinterval of $[0,\zeta(\omega))$ for every fixed $\omega$ in the defining set of $A$, where $\zeta$ is the lifetime of $X$. A continuous AF (CAF in abbreviation) $A$ is of bounded variation if and only if $A$ can be expressed as a difference of two PCAFs:
\[
	A_t(\omega)=A^1_t(\omega)-A^2_t(\omega),\quad t<\zeta(\omega), A^1,A^2\in \mathbf{A}^+_c
\]
where $\mathbf{A}_c^+$ is the space of all the PCAFs. Let $\mu_1$ and $\mu_2$ be the Revuz measures of $A^1$ and $A^2$, then 
\[
	\mu_A:=\mu_1-\mu_2
\]
is the  \emph{signed smooth measure} associated with   $A$.  For more details, see \S5.4 of \cite{FU}. We say $N$ in \eqref{XTXBT} is \emph{of bounded variation} if $N^i$ is of bounded variation for $i=1,2,3$.

\begin{theorem}\label{PRO4}
The zero energy part $N$ in \eqref{XTXBT} is not of bounded variation.
\end{theorem}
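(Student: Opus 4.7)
I proceed by contradiction, assuming each component $N^i$ ($i=1,2,3$) is of bounded variation, and aim to show the total variation on $[0,t]$ is almost surely infinite for small $t>0$. The main tool is It\^o excursion theory at $\{0\}$, which is available because $\{0\}$ is regular recurrent for $X$ by Corollary~\ref{COR3}: the excursions $(e_s)_{s\ge 0}$ from $\{0\}$ form a Poisson point process indexed by the local time $L^0$ with intensity the excursion measure $\mathbf{n}$, and by the characterization given just after Corollary~\ref{COR4} the $X^0$-entrance law $\{\nu_t\}$ that determines $\mathbf{n}$ satisfies $\int_0^\infty \nu_t\,dt = m$. From \eqref{NTIT}, on any excursion interval the increment of $N^i$ is absolutely continuous with density $-(\gamma|e(u)|+1)e_i(u)/|e(u)|^2$, so the total variation contributed by one excursion is $V^i(e):=\int_0^{\zeta}(\gamma|e(u)|+1)|e_i(u)|/|e(u)|^2\,du$. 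Continuity of $N^i$ together with the fact that $\{t:X_t=0\}$ has Lebesgue measure zero shows that the total variation of $N^i$ on $[0,t]$ equals $\sum_{s\le L^0_t} V^i(e_s)$.

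The characterization $\int_0^\infty\nu_t\,dt=m$ yields, by Fubini, the identity $\mathbf{n}\bigl[\int_0^\zeta f(e(u))\,du\bigr]=\int f\,dm$ for any nonnegative Borel $f$ on $\mathbf{R}^3_0$. Applied to $f(x)=(\gamma|x|+1)|x_i|/|x|^2$,
\[
\mathbf{n}[V^i] \;=\; \int_{\mathbf{R}^3_0} \frac{(\gamma|x|+1)|x_i|}{|x|^2}\psi_\gamma(x)^2\,dx.
\]
Near $0$, the integrand is comparable to $|x_i|/|x|^4$; in spherical coordinates $x=r\omega$ this produces the factor $\bigl(\int_{S^2}|\omega_i|\,d\sigma\bigr)\int_0^\epsilon r^{-1}\,dr=+\infty$, so $\mathbf{n}[V^i]=+\infty$.

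To conclude, the standard Poisson-point-process criterion asserts that $\sum_{s\le L^0_t}V^i(e_s)=+\infty$ almost surely whenever $\mathbf{n}[V^i\wedge 1]=+\infty$ and $L^0_t>0$. To upgrade $\mathbf{n}[V^i]=+\infty$ to $\mathbf{n}[V^i\wedge 1]=+\infty$, I invoke the skew-product structure (Proposition~\ref{PRO3}): $e(u)=\rho(u)\Theta(A(u))$ with $\rho$ a radial excursion of $\bar r$ and $\Theta$ an independent spherical Brownian motion. Brownian-type scaling for $\bar r$ near $\{0\}$ gives $V^i(e)\lesssim \sqrt{\zeta(e)}$ on short excursions, and combined with the small-time asymptotic $\mathbf{n}(\zeta>t)\sim c\,t^{-1/2}$ (inherited from the reflecting diffusion with speed $l$ and scale $s$ of Proposition~\ref{PRO3}) this yields $\mathbf{n}[V^i\cdot 1_{\{V^i\le 1\}}]\gtrsim \int_0^{\delta_0}\sqrt{t}\cdot t^{-3/2}\,dt=+\infty$, hence $\mathbf{n}[V^i\wedge 1]=+\infty$. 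Since $L^0_t>0$ almost surely for $t>0$ by regular recurrence of $\{0\}$, the sum $\sum_{s\le L^0_t}V^i(e_s)$ is almost surely infinite, contradicting the bounded variation of $N^i$. The main obstacle in this plan is exactly this last upgrade: the entrance-law identity gives $\mathbf{n}[V^i]=+\infty$ immediately, but the Poisson criterion requires the truncated version, which forces one to quantify that the divergence of $\mathbf{n}[V^i]$ is produced by short excursions on which $V^i\le 1$, and this in turn rests on the small-time scaling of the radial excursion law of $\bar r$.
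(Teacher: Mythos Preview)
Your excursion-theoretic route is genuinely different from the paper's, and the core identity $\mathbf n\bigl[\int_0^\zeta f(e(u))\,du\bigr]=\int f\,dm$ (hence $\mathbf n[V^i]=\infty$) is correct and elegant. The paper instead stays entirely inside Dirichlet-form potential theory: assuming bounded variation, it invokes the Revuz correspondence (Theorem~5.5.4 of \cite{FU}) to produce a signed smooth measure $\mu_i$ with $\mathcal E(f^i,u)=-\langle\mu_i,u\rangle$, identifies $\mu_i$ on $\mathbf R^3_0$ by integration by parts, and deduces that $(\gamma|x|+1)|x|^{-1}\,m(dx)$ must be a smooth measure. It then contradicts smoothness by exhibiting (via \cite{FPJ2}) a quasi-continuous, q.e.\ positive $g$ with $\int g(x)(\gamma|x|+1)|x|^{-1}\,m(dx)<\infty$, and using fine continuity of $g$ at $0$ together with rotation invariance to force the integral to diverge. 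That argument never needs to quantify excursion behaviour.

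Your upgrade step, however, has a genuine gap. The Poisson criterion indeed requires $\mathbf n[V^i\wedge 1]=\infty$, not merely $\mathbf n[V^i]=\infty$, and your sketch for this does not close. The stated bound $V^i(e)\lesssim\sqrt{\zeta(e)}$ is an \emph{upper} bound; it lets you conclude $\{V^i\le 1\}\supset\{\zeta\le\delta_0\}$ (if it held pathwise, which is itself nontrivial since it involves both the angular factor $|\omega_i(A(u))|$ and the drifted---not pure---Brownian radial excursion), but it cannot by itself yield the \emph{lower} bound $\mathbf n[V^i\cdot 1_{\{V^i\le 1\}}]\gtrsim\int_0^{\delta_0}\sqrt{t}\,t^{-3/2}\,dt$. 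For that you would need the matching lower estimate $V^i\gtrsim\sqrt{\zeta}$ on short excursions, which requires controlling the time-average of $|\omega_i(A(u))|$ while $A(u)$ sweeps through $(-\infty,\infty)$; this is plausible by an ergodic argument for the spherical Brownian motion, but you have not supplied it. An alternative repair is to pass to $\sum_i V^i\ge J:=\int_0^\zeta\rho(u)^{-1}\,du$ (using $\sum_i|\omega_i|\ge 1$), since $J$ depends only on the radial excursion and, for the reflected Brownian motion with drift $-\gamma$, one can verify $\mathbf n[J\wedge 1]=\infty$ directly from the known scaling $J\overset{d}{=}\sqrt{\zeta}\,K$ with $\mathbb E[K]<\infty$ and $\mathbf n(\zeta\in dt)\sim ct^{-3/2}\,dt$; rotation invariance then forces each $N^i$ to fail. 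A minor additional point: your equality ``total variation of $N^i=\sum_{s\le L^0_t}V^i(e_s)$'' should be an inequality $\ge$, since the total-variation measure of a continuous BV function can charge the zero set $\{t:X_t=0\}$; fortunately only this inequality is needed for the contradiction.
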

\begin{proof}
Arguing by contradiction, suppose that  $N$ is of bounded variation. It follows from Theorem~5.5.4 of \cite{FU} that for each $i$ the signed smooth measure $\mu_i$ of $N^i$ satisfies
\[
	\mathcal{E}(f^i,u)=-\langle \mu_i,u\rangle
\]
for all $u\in \mathcal{F}_{b, F_k}$ where $\{F_k\}$ is a generalized nest associated with $\mu_i$;  {\it i.e.},  $\mu_i(F_k)<\infty$ for all $k\geq 1$. Let $F_k^n:=F_k\cap \{x:1/n\leq |x|\leq n\}$ for each $n\geq 1$. Then $F_k^n$ is compact.
For all $u\in C_c^\infty(\mathbf{R}^3)$ with $\supp u\subset F_k^n$, clearly $u\in \mathcal{F}_{b,F_k}$, and
\[
\begin{aligned}
	\langle \mu_i,u\rangle&=-\mathcal{E}(f^i,u)=-\frac{1}{2}\int \frac{\partial u}{\partial x_i} \psi_\gamma^2(x)dx =\frac{1}{2}\int u(x)\frac{\partial \psi_\gamma^2}{\partial x_i}(x)dx =-\int u(x)\frac{\gamma |x|+1}{|x|}\frac{x_i}{|x|}\,m(dx).
\end{aligned}\]
From this we deduce that 
\begin{equation}\label{MIDF}
	\mu_i(dx)=-\frac{\gamma |x|+1}{|x|}\frac{x_i}{|x|}\,m(dx)
\end{equation}
on each $F_k^n$. It follows that \eqref{MIDF} holds on $(\cup_{k\geq 1}F_k)\cap \{x:|x|>0\}$. On the other hand since $(\cup_{k\geq 1}F_k)^c$ is $\mathcal{E}$-polar, we have $\mu_i((\cup_{k\geq 1}F_k)^c)=m((\cup_{k\geq 1}F_k)^c)=0$. Thus \eqref{MIDF} holds on $\{x:|x|>0\}$. Therefore there is a constant $c_i$ such that 
\begin{equation}\label{MIDFG}
	\mu_i(dx)=-\frac{\gamma |x|+1}{|x|}\frac{x_i}{|x|}\,m(dx)+c_i\delta_{\{0\}}.
\end{equation}
In particular $-\frac{\gamma |x|+1}{|x|}\frac{x_i}{|x|}\,m(dx)$ is a signed smooth measure. Consequently, 
\[
	\frac{\gamma |x|+1}{|x|}\frac{|x_i|}{|x|}\,m(dx)
	\]
 is smooth. Since $|x|\leq |x_1|+|x_2|+|x_3|$, it follows that
 \[
 		\frac{\gamma |x|+1}{|x|}\,m(dx)
 \]	
 is also smooth. Then there exists a quasi-continuous and q.e. strictly positive function $g$ such that 
 \begin{equation}\label{IGXF}
 		\int g(x)\frac{\gamma |x|+1}{|x|}\,m(dx)<\infty;
 \end{equation}
 see Thm.~4.22  in  \cite{FPJ2}.
 In particular, it follows from Proposition~\ref{PRO2} that $g(0)>0$. Moreover, because $\{0\}$ is not polar, $g$ is finely continuous at $0$ {by \cite[Theorem~4.2.2]{FU}}. Thus, if we  let $B_{\epsilon}:=\{x: |x|\leq \epsilon\}$ and $T_\epsilon$ the hitting time of $B_{\epsilon}^c$ by $X$, then  (as noted in \cite{MYER}) 
 \[
 	E^0(g(X_{T_\epsilon}))\rightarrow g(0)
 \]
 as $\epsilon\rightarrow 0$, whereas $X_{T_\epsilon}$ is uniformly distributed on $\partial B_{\epsilon}:=\{x:|x|=\epsilon\}$ since $X$ is rotation invariant. Thus 
 \[
 	\int_{S^2}g(\epsilon u)\,\sigma(du)\rightarrow g(0)
 \]
 as $\epsilon \rightarrow 0$. Thus there is a constant $\delta>0$ such that when $\epsilon<\delta$, 
 \[
 \int_{S^2}g(\epsilon u)\, \sigma(du)>\frac{1}{2}g(0).
 \]
 It follows that
 \[\begin{aligned}
 	\int g(x)\frac{\gamma |x|+1}{|x|}m(dx)&=2\pi\gamma\int_0^\infty  \frac{\gamma r+1}{r}\text{e}^{-2\gamma r}dr\int_{S^2} g(ru)\,\sigma(du) \geq 2\pi\gamma\int_0^\delta \frac{\gamma r+1}{r}\text{e}^{-2\gamma r}dr\cdot \frac{1}{2}g(0)
 \end{aligned}\]
which is infinite, in violation of  \eqref{IGXF}. 
\end{proof}

\begin{corollary}
The diffusion $X$ associated with $(\mathcal{E,F})$ is not a semi-martingale.
\end{corollary}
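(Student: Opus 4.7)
The plan is to argue by contradiction. Suppose $X$ is a semimartingale under $\PPP^x$ for quasi-every starting point $x$. Then each coordinate process $(X^i_t)_{t\geq 0}$, $i=1,2,3$, is a continuous semimartingale, and it admits a canonical (Doob--Meyer) decomposition
\[
X^i_t - X^i_0 = M^i_t + V^i_t,
\]
with $M^i$ a continuous local martingale and $V^i$ a continuous adapted process of bounded variation. My aim is to show that this decomposition must coincide with the Fukushima decomposition \eqref{XTXBT}, so that $N^i = V^i$ is of bounded variation, in contradiction with Theorem~\ref{PRO4}.

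The key input is the standard fact from the theory of Dirichlet forms (see, e.g., Theorem~5.2.3 and Lemma~5.1.5 of \cite{FU}) that a continuous additive functional of zero energy has identically zero quadratic variation process, and that its mutual quadratic variation with any MAF of finite energy also vanishes. Applying this to $N^i$ and to the MAF $B^i$ yields $[N^i,N^i]_t \equiv 0$ and $[B^i,N^i]_t \equiv 0$, hence $[X^i - X^i_0,X^i - X^i_0]_t = [B^i,B^i]_t = t$.

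With these ingredients in hand, uniqueness of the canonical decomposition of a continuous semimartingale takes over. Indeed, subtracting the two decompositions,
\[
M^i_t - B^i_t = N^i_t - V^i_t,
\]
the left side is a continuous local martingale while the right side is continuous of bounded variation; being both, it must be identically zero. Therefore $M^i = B^i$ and $N^i = V^i$ for each $i=1,2,3$, so $N$ is of bounded variation, contradicting Theorem~\ref{PRO4}.

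The only nontrivial step is the zero-quadratic-variation property of a CAF of zero energy, which is the technical reason the Fukushima decomposition is the ``right'' replacement for the semimartingale decomposition in the absence of the latter; the rest is routine uniqueness for continuous semimartingales. I expect no real obstacle once this fact is cited.
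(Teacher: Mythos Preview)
Your approach is the standard one and is exactly what the paper has in mind; the paper gives no proof beyond stating the corollary, so you are simply filling in the implicit argument.

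There is, however, a slip in your final paragraph. You write that ``the right side $N^i_t - V^i_t$ is continuous of bounded variation,'' but this presupposes that $N^i$ is of bounded variation, which is precisely what you are trying to deduce. The repair is immediate from what you have already set up: under the contradiction hypothesis $N^i = X^i - X^i_0 - B^i$ is a continuous semimartingale, and the zero-energy property gives $[N^i]\equiv 0$. Since $V^i$ is continuous of bounded variation, $[N^i - V^i] = [N^i] = 0$, so the continuous local martingale $M^i - B^i$ has vanishing quadratic variation and is therefore identically zero. Equivalently and more directly: a continuous semimartingale with zero quadratic variation has trivial local-martingale part, so $N^i$ itself is of bounded variation, contradicting Theorem~\ref{PRO4}.
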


An interesting fact is that the first term in \eqref{MIDFG} is a signed smooth measure with respect to $(\mathcal{E}^0,\mathcal{F}^0)$ but not   with respect to $(\mathcal{E,F})$. The key to this phenomena is of course that  $\{0\}$ is not polar because $\psi_\gamma$ explodes at $0$. But by  modifying $\psi_\gamma$ near $0$ we can obtain a sequence of nice semi-martingale distorted Brownian motions that are  semi-martingales and that approximate   $X$ in a suitable  sense. 

Define $\psi_\gamma^n(x):=\psi_\gamma(x)$ if $|x|\geq 1/n$ and $\tilde{\psi}_\gamma(1/n)$ if $|x|<1/n$, {where $\tilde{\psi}_\gamma$ is the radial function of $\psi_\gamma$.} Then $\psi_\gamma^n$ is a bounded function on $\mathbf{R}^3$ and $\nabla \psi_\gamma^n\in L^2(\mathbf{R}^3)$. Let 
\[
	\begin{aligned}
		&\mathcal{F}^n:=\{u\in L^2(\mathbf{R}^3,\psi_\gamma^n(x)^2dx):\nabla u\in L^2(\mathbf{R}^3,\psi_\gamma^n(x)^2dx)\},\\
		&\mathcal{E}^n(u,v)=\frac{1}{2}\int_{\mathbf{R}^3} \nabla u(x)\cdot \nabla v(x)\psi_\gamma^n(x)^2dx,\quad u,v \in \mathcal{F}^n.
	\end{aligned}
\]
It follows from \cite[Theorem~3.1]{MZ2} (see also \cite{PW})  that $(\mathcal{E}^n,\mathcal{F}^n)$ is regular on $L^2(\mathbf{R}^3,\psi_\gamma^n(x)^2dx)$ with the core $C_c^\infty(\mathbf{R}^3)$.  Note that the associated diffusion $X^n$ of $(\mathcal{E}^n,\mathcal{F}^n)$ has the following Fukushima decomposition with respect to the coordinate functions:
\[
	X^n_t-X^n_0=B_t-\int_0^t \frac{\gamma |X^n_s|+1}{|X^n_s|^2}\cdot X^n_s \cdot 1_{\{|X^n_s|\geq \frac{1}{n}\}} ds.
\]
Notice that $(\mathcal{E}^n,\mathcal{F}^n)$ and $(\mathcal{E,F})$ are defined on different Hilbert spaces. The natural way to relate them is by  $h$-transforms. Recall that $(P_t)_{t\geq 0},(Q_t)_{t\geq 0}$ are the semigroups associated with   $(\mathcal{E,F})$ and  $\mathcal{L}_\gamma$ respectively, and are related by  \eqref{QTUT}. Denote the semigroup of $(\mathcal{E}^n,\mathcal{F}^n)$ by $(Q^n_t)_{t\geq 0}$. Define a semigroup $(Q^n_t)_{t\geq 0}$ on $L^2(\mathbf{R}^3)$ by 
\[
	Q_t^nu:= \text{e}^{\frac{\gamma^2 t}{2}}\psi_\gamma^n\cdot P_t^n(u\cdot (\psi_\gamma^n)^{-1}), \quad u\in L^2(\mathbf{R}^3), t\geq 0.
\]
Then $(\mathcal{E}^n,\mathcal{F}^n)$ is convergent to $(\mathcal{E,F})$ in the following sense:

\begin{proposition}\label{PRO5}
	There is a subsequence $\{n_k\}$  such that $Q_t^{n_k}$ is strongly convergent to $Q_t$ on $L^2(\mathbf{R}^3)$ as $k\rightarrow \infty$ for all $t\geq 0$. 
\end{proposition}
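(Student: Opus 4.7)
The plan is to deduce strong convergence of the semigroups from Mosco convergence of the associated non-negative closed symmetric quadratic forms, all realized on the common Hilbert space $L^2(\mathbf{R}^3)$.

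First I would put $(Q_t^n)_{t\ge 0}$ on a common footing: multiplication by $\psi_\gamma^n$ is a unitary isomorphism from $L^2(\mathbf{R}^3)$ onto $L^2(\mathbf{R}^3,(\psi_\gamma^n)^2dx)$, so the formula defining $Q_t^n$ makes $(Q_t^n)$ a strongly continuous semigroup on $L^2(\mathbf{R}^3)$ with operator norm bounded by $\exp(\gamma^2t/2)$. Transcribing the calculation in the proof of Theorem~\ref{THM1}, the non-negative closed symmetric form on $L^2(\mathbf{R}^3)$ associated with the shifted generator $-K^n+\gamma^2/2$ (where $K^n$ is the generator of $(Q_t^n)$) is
\[
\tilde{\mathcal{Q}}^n(u,v)=\mathcal{E}^n(u/\psi_\gamma^n,v/\psi_\gamma^n),\quad u,v\in \tilde{\mathcal{F}}^n:=\{u\in L^2(\mathbf{R}^3): u/\psi_\gamma^n\in \mathcal{F}^n\},
\]
and in exactly the same fashion the form $\tilde{\mathcal{Q}}(u,v):=\mathcal{E}(u/\psi_\gamma,v/\psi_\gamma)$ on $\tilde{\mathcal{F}}:=\{u\in L^2(\mathbf{R}^3): u/\psi_\gamma\in \mathcal{F}\}$ is attached to $-\mathcal{L}_\gamma+\gamma^2/2$.

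Next I would verify the two Mosco conditions for $\tilde{\mathcal{Q}}^n\to \tilde{\mathcal{Q}}$. For the recovery sequence, note that $\{w\psi_\gamma:w\in C_c^\infty(\mathbf{R}^3)\}$ is a core for $\tilde{\mathcal{Q}}$, thanks to the core property of $C_c^\infty(\mathbf{R}^3)$ in $(\mathcal{E},\mathcal{F})$ established in Theorem~\ref{THM1}; for $u=w\psi_\gamma$ set $u_n:=w\psi_\gamma^n$. Because $\tilde\psi_\gamma$ is decreasing we have $0\le \psi_\gamma^n\le \psi_\gamma$ pointwise with $\psi_\gamma^n\uparrow \psi_\gamma$, and dominated convergence yields $u_n\to u$ strongly in $L^2(\mathbf{R}^3)$ together with
\[
\tilde{\mathcal{Q}}^n(u_n,u_n)=\tfrac12\int|\nabla w|^2(\psi_\gamma^n)^2\,dx\to\tfrac12\int|\nabla w|^2\psi_\gamma^2\,dx=\tilde{\mathcal{Q}}(u,u).
\]
For the liminf estimate, take $u_n\rightharpoonup u$ weakly in $L^2(\mathbf{R}^3)$ with $C:=\liminf \tilde{\mathcal{Q}}^n(u_n,u_n)<\infty$; pass to a subsequence attaining the liminf and set $w_n:=u_n/\psi_\gamma^n$. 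On every compact $K\subset \mathbf{R}^3_0$, $\psi_\gamma^n$ is bounded above and below by fixed positive constants for $n$ large, so $\{w_n\}$ is bounded in the local weighted Sobolev space carrying $\mathcal{F}$; by Rellich--Kondrachov and a diagonal extraction we obtain a further subsequence along which $w_n\to w$ weakly in $\mathcal{F}$ and a.e.\ on $\mathbf{R}^3_0$. Lower semicontinuity of $\mathcal{E}$ gives $\mathcal{E}(w,w)\le C$, while the a.e.\ identity $u=w\psi_\gamma$ on the Lebesgue-full set $\mathbf{R}^3_0$ extends to $\mathbf{R}^3$, whence $\tilde{\mathcal{Q}}(u,u)\le \liminf \tilde{\mathcal{Q}}^n(u_n,u_n)$.

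With Mosco convergence in hand, the classical Mosco theorem yields strong convergence of the resolvents $(\lambda-K^n)^{-1}\to (\lambda-\mathcal{L}_\gamma)^{-1}$ on $L^2(\mathbf{R}^3)$ for every $\lambda>\gamma^2/2$, and hence strong convergence of the semigroups $Q_t^{n_k}\to Q_t$ for each $t\ge 0$ along the subsequence extracted in the liminf step. The main obstacle is precisely that liminf estimate: although $\psi_\gamma^n\to\psi_\gamma$ monotonically and in a dominated fashion, the weights degenerate at the origin (with $\psi_\gamma^n(0)=\tilde\psi_\gamma(1/n)\to\infty$), so transferring weak $L^2$-compactness from $u_n$ to $w_n=u_n/\psi_\gamma^n$ cannot be done globally; it proceeds by localization on compact subsets of $\mathbf{R}^3_0$ followed by diagonal extraction, and this is what forces the sub-sequential formulation of the proposition.
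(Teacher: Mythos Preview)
The paper's proof is a single sentence citing Theorem~2.3 of Albeverio--Kusuoka--Streit \cite{AKS} as a black box. Your Mosco-convergence approach is a legitimate, more self-contained alternative---and is close in spirit to the machinery in \cite{AKS}---so at the level of strategy the two are compatible rather than genuinely different.

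There is, however, a conceptual error in your final paragraph. The subsequence extracted while verifying the liminf condition is a proof device, not a restriction on the conclusion: Mosco's condition (M1) demands $\tilde{\mathcal Q}(u,u)\le\liminf_n\tilde{\mathcal Q}^n(u_n,u_n)$ for \emph{every} weakly convergent sequence, and the standard way to establish such an inequality is precisely to pass to a subsequence realizing the liminf and then extract further (if the inequality failed along some sequence, it would fail along a subsequence to which your compactness argument applies, a contradiction). Once (M1) and (M2) hold for the full family $\{\tilde{\mathcal Q}^n\}$, Mosco's theorem yields strong resolvent and hence semigroup convergence along the \emph{entire} sequence $n$. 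So your argument, if carried out, proves more than the proposition states; nothing in it ``forces the sub-sequential formulation.''

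A secondary point: you write ``$w_n\to w$ weakly in $\mathcal F$,'' but the available bound is on $\mathcal E^n(w_n,w_n)=\tfrac12\int|\nabla w_n|^2(\psi_\gamma^n)^2\,dx$ with the \emph{smaller} weight $(\psi_\gamma^n)^2\le\psi_\gamma^2$, so $\{w_n\}$ is not a priori bounded in $\mathcal F$. The localization-plus-Fatou route you sketch (on each compact $K\subset\mathbf R^3_0$ one has $\psi_\gamma^n=\psi_\gamma$ for $n$ large, giving local $H^1$ bounds, then weak lower semicontinuity on $K$ and monotone exhaustion) is what actually shows $w\in\mathcal F$ with $\mathcal E(w,w)\le C$; the phrasing should be adjusted to reflect this.
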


The above proposition follows from Theorem~2.3 of \cite{AKS}.

%%%%%%%%%%%%%%%%%%%%%%%%%%%%%%%%%%%%%%%%%%%%%%%%%%%%%%%%%%%%%%%%%%%%%%

%%%%%%%%%%%%%%%%%%%%%%%%%%%%%%%%%%%%%%%%%%%%%%%%%%%%%%%
%%% Acknowledgements. 致谢
%%%%%%%%%%%%%%%%%%%%%%%%%%%%%%%%%%%%%%%%%%%%%%%%%%%%%%%
\Acknowledgements{The second named author is partially supported by {NSFC (No. 11688101 and 11801546) and Key Laboratory of Random Complex Structures and Data Science, Academy of Mathematics and Systems Science, Chinese Academy of  Sciences (No. 2008DP173182)}.}

%%%%%%%%%%%%%%%%%%%%%%%%%%%%%%%%%%%%%%%%%%%%%%%%%%%%%%%
%%% Conflict of interest. 作者利益声明
%%%%%%%%%%%%%%%%%%%%%%%%%%%%%%%%%%%%%%%%%%%%%%%%%%%%%%%
%\InterestConflict

%%%%%%%%%%%%%%%%%%%%%%%%%%%%%%%%%%%%%%%%%%%%%%%%%%%%%%%
%%% Supplements. 补充材料, 非必选
%%%%%%%%%%%%%%%%%%%%%%%%%%%%%%%%%%%%%%%%%%%%%%%%%%%%%%%
%\Supplements{}

%%%%%%%%%%%%%%%%%%%%%%%%%%%%%%%%%%%%%%%%%%%%%%%%%%%%%%%
%%% Reference section. 参考文献
%%% citation in the content using "some words~\cite{1,2}".
%%% ~ is needed to make the reference number is on the same line with the word before it.
%%%%%%%%%%%%%%%%%%%%%%%%%%%%%%%%%%%%%%%%%%%%%%%%%%%%%%%

%%%%%%%%%%%%%%%%%%%%%%%%%%%%%%%%%%%%%%%%%%%%%%%%%%%%%%%
%%% Appendix sections. 附录章节, 非必选
%%%%%%%%%%%%%%%%%%%%%%%%%%%%%%%%%%%%%%%%%%%%%%%%%%%%%%%

\end{document}